\newtheorem{theorem}{Theorem}[section]
\newtheorem{proposition}[theorem]{Proposition}
\newtheorem{lemma}[theorem]{Lemma}
\theoremstyle{definition}
\newtheorem{proof*}{Proof.}
\numberwithin{equation}{section}
\newtheorem*{remark}{Remark} 
\newtheorem{thmx}{Theorem}
\newcommand{\PG}{{\mbox{\rm PG}}}
\newcommand{\Q}{{\mathcal Q}}
\newcommand{\w}[1]{\widehat{#1}}
\newcommand{\wt}{{\mbox{\rm wt}}}
\newcommand{\Aut}{{\mbox{\rm Aut}}}
\begin{document}
\title{Switched graphs of some strongly regular graphs related to the symplectic graph}
\author{Alice M.W. Hui, Bernardo Rodrigues}
\maketitle

\centerline {\tiny huimanwa@gmail.com, Rodrigues@ukzn.ac.za, \today}

\begin{abstract}
Applying a method of Godsil and McKay \cite{GM} to some graphs related to the symplectic graph, a series of new infinite families of strongly regular graphs with parameters $(2^n\pm2^{(n-1)/2},2^{n-1}\pm2^{(n-1)/2},2^{n-2}\pm2^{(n-3)/2},2^{n-2}\pm2^{(n-1)/2})$
are constructed for any odd $n \geq 5$. The construction is described in terms of geometry of quadric in projective space. The binary linear codes of the switched graphs are $[2^n \mp 2^{\frac{n-1}{2}},n+3,2^{t+1}]_2$-code or $[2^n \mp 2^{\frac{n-1}{2}},n+3,2^{t+2}]_2$-code.
\end{abstract}

Keywords: strongly regular graph, cospectral graphs, linear code of a graph\\

MSC 2010: 05E30 05C50 94B25 51A50


\section{Introduction}\label{sc intro}
Consider the $n$-dimensional projective space $\PG(n,2)$ over the finite field $\mathbb F_2$. That is, $\PG(n,2)=\mathbb F_2^{n+1}\setminus \{ 0 \}$. When $n$ is odd, there are two non-equivalent non-singular quadrics in $\PG(n,2)$, namely elliptic and hyperbolic. For general references, see \cite[Ch. 5]{Hir1} and \cite[Ch. 22]{HirT3}. Both quadrics define a symplectic polarity (null polarity) in $\PG(n,2)$ \cite[Thoerem 5.28]{Hir1}.

Let $n \geq 5$ be an odd number. Let $\Q$ be a non-singular quadric in $\PG(n,2)$. Define the graph $\Gamma_{\Q}=(V_\Q,E_\Q)$ as follows. The vertex set $V_\Q$ is the set of points of $\PG(n,2)$ not in $\Q$. Two vertices $x$ and $y$ are adjacent in $\Gamma_{\Q}$ if and only if the line $xy$ joining them is an external line of $\Q$. $\Gamma_{\Q}$ is the complement of a subgraph of the symplectic graph $Sp(n+1,2)$, which is the graph of the perpendicular relation induced by a non-degenerate symplectic form of $\mathbb F_2^{n+1}$ on the non-zero vectors of $\mathbb F_2^{n+1}$. In \cite{HS,HPV,P}, $\Gamma_{\Q}$ is denoted by $\overline{\mathcal N^\epsilon_{n+1}}$, where $\epsilon$ is $+$ (plus) if $\Q$ is hyperbolic, and $-$ (minus) if $\Q$ is elliptic.

A {\it strongly regular graph} with parameters $(v,k,\lambda,\mu)$ is a graph with $v$ vertices such that each vertex lies on exactly $k$ edges; any two adjacent vertices have exactly $\lambda$ neighbours in common; and any two non-adjacent vertices have exactly $\mu$ neighbours in common. The adjacency matrix of a strongly regular graph has exactly three eigenvalues. One is $k$ with multiplicity 1, and the remaining two are usually denoted by $r$ and $s$, $r>s$ with multiplicities $f$ and $g$ respectively. For general references, see \cite[Ch.9]{BH} and \cite[Ch.2]{CV}. It is well-known that $\Gamma_{\Q}$ defined above is a strongly regular graph. Table \ref{table para} shows the parameters of $\Gamma_{\Q}$ for the different quadrics in $\PG(n,2)$ (see \cite{HPV}).

\begin{table}[h!]
\begin{tabular}{|c|c||c|c|c|c|}
 \hline
 $\Q$ & graph & $v$ & $k$ & $\lambda$ & $\mu$ \\
 \hline
 elliptic & $\Gamma_{\Q}=\overline{\mathcal N^-_{n+1}}$ & $2^n+2^{\frac{n-1}{2}}$ & $2^{n-1}+2^{\frac{n-1}{2}}$ & $2^{n-2}+2^{\frac{n-3}{2}}$ & $2^{n-2}+2^{\frac{n-1}{2}}$\\
 hyperbolic & $\Gamma_{\Q}=\overline{\mathcal N^+_{n+1}}$ & $2^n-2^{\frac{n-1}{2}}$ & $2^{n-1}-2^{\frac{n-1}{2}}$ & $2^{n-2}-2^{\frac{n-3}{2}}$ & $2^{n-2}-2^{\frac{n-1}{2}} $ \\
 \hline
 \hline
 $\Q$ & graph & $r$ & $s$ & $f$ &$g$\\
 \hline
 elliptic & $\Gamma_{\Q}=\overline{\mathcal N^-_{n+1}}$ & $2^{\frac{n-3}{2}}$& $-2^{\frac{n-1}{2}}$ & $\frac{1}{3}(2^{n+1}-4)$ & $\frac{2^n+1}{3}+2^{\frac{n-1}{2}}$ \\
 hyperbolic & $\Gamma_{\Q}=\overline{\mathcal N^+_{n+1}}$ & $2^{\frac{n-1}{2}}$ & $-2^{\frac{n-3}{2}}$ & $\frac{2^n+1}{3}-2^{\frac{n-1}{2}}$ & $\frac{1}{3}(2^{n+1}-4)$\\
 \hline
\end{tabular}
\vspace{0.3cm}
\caption{Parameters of $\Gamma_{\Q}$}\label{table para}
\end{table}
\vspace{-0.3cm}

Godsil and McKay (1982) introduced a method to generate graphs with the same adjacency spectrum \cite{GM} i.e. the adjacency matrices of the graphs have equal multisets of eigenvalues. The method is described as follows. Let $\Gamma$ be a graph. Let $S$ be a subset of the vertex set such that the subgraph of $\Gamma$ with vertex set $S$ is regular. Suppose any vertex outside $S$ has $0$, $|S|$ or $\frac{1}{2}|S|$ neighbours in $S$. Consider the graph $\Gamma'$ obtained by switching $\Gamma$ as follows: for any vertex $x$ of $\Gamma$ outside $S$, if $x$ has $\frac{1}{2}|S|$ neighbours in $S$, then delete those $\frac{1}{2}|S|$ edges and join $x$ to the other $\frac{1}{2}|S|$ vertices. We call $S$ a {\it Godsil and McKay switching set} of $\Gamma$. By Godsil and McKay \cite{GM}, $\Gamma'$ has the same adjacency spectrum as $\Gamma$. In the case where $\Gamma$ is a strongly regular graph, $\Gamma'$ has the same adjacency spectrum as $\Gamma$ and thus is also a strongly regular graph with the same parameters (see \cite{BH}). Recently, there has been interest in constructing new strongly regular graphs from known ones using the method of Godsil-McKay described above, see for example \cite{AH} and \cite{BJP}.

In this article, we apply the method of Godsil-McKay to $\Gamma_{\Q}$ as described above. The paper is organized as follows: After a brief description of our terminology in Section \ref{sc terminology}, we give two constructions of Godsil-McKay switching sets for $\Gamma_{\Q}$ in Section \ref{sc switching set}. In Sections \ref{sc codes} and \ref{sc rank}, we study the binary code spanned by the rows of the adjacency matrix $\Gamma_{\Q}$ and that of its switched graphs. In Section \ref{sc number}, we give a number of switched graphs found and find the parameters of the codes of the switched graphs.

\section{Terminology and notation}\label{sc terminology}

For any $m = 0,1,2,\cdots, n-1$, a {\it subspace of dimension $m$}, or {\it $m$-space}, of $\PG(n,2)$ is a set of points all of whose representing vectors form, together with the zero, a subspace of dimension $m+1$ of $\mathbb F_2^{n+1}$.
The number of points of an $m$-space in $\PG(n,2)$ is $2^{m+1}-1$ \cite[Theorem 3.1]{Hir1}.

A {\it quadric} $Q_n$ in $\PG(n,2)$ is the set of points $[X_0, X_1, \cdots , X_n]$ satisfying a non-zero homogeneous equation of degree two, i.e. $\sum_{i\leq j, i,j=0}^{n} a_{ij} X_i X_j = 0$ for some $a_{ij} \in \mathbb F_q$, not all zero. If the equation can be reduced to fewer than $n+1$ variables by a change of basis, $Q_n$ is called {\it singular}. Otherwise, it is {\it non-singular}.

Depending on the parity of $n$, there is one or there are two quadrics under the action of the automorphism group of $\PG(n,2)$. For $n$ odd, there are two distinct non-singular quadrics, respectively the elliptic quadric with canonical equation $f (X_0, X_1)+X_2X_3+\cdots+ X_{n-1}X_n=0$ where $f$ is an irreducible binary quadratic form, and the hyperbolic quadric with canonical equation $X_0 X_1+X_2X_3+\cdots+ X_{n-1}X_n=0$.
For $n$ even, there is the parabolic quadric with canonical equation $X_0^2 + X_1X_2+\cdots+ X_{n-1}X_n=0$. For a parabolic quadric $Q_n$, there is an unique point in $\PG(n,2)\setminus Q_n$, called {\it the nucleus} of $Q_n$, such that all line through the nucleus is tangent to $Q_n$ (see \cite[page10]{HirT3}). Table \ref{table quadric} shows the number of points of different non-singular quadrics.

\begin{table}[h!]
\begin{tabular}{|c|c|c|c|}
 \hline
 quadric $Q_n$ & Elliptic & Hyperbolic & Parabolic \\
 \hline
  number of points & $2^n-2^{\frac{n-1}{2}}-1$ & $2^n+2^{\frac{n-1}{2}}-1$ & $2^n-1$\\
  \hline
\end{tabular}
\vspace{0.3cm}
\caption{Number of points in non-singular quadrics}\label{table quadric}
\end{table}
\vspace{-0.3cm}

A singular quadric in $\PG(n,2)$ is either an $m$-space, $m<n$, or a {\it cone} $\Pi_{n-t-1} Q_t$ which is the set of points on the lines joining an $(n-t-1)$-space $\Pi_{n-t-1}$ to a non-singular quadric $Q_t$ in a $t$-space $\Pi_t$ with $\Pi_{n-t-1} \cap \Pi_t = \emptyset$. The number of points of such a cone is
\begin{equation}\label{eqn cone}
| \Pi_{n-t-1} Q_t| = (2^{n-t}-1)+2^{n-t}|Q_t|.
\end{equation}

A {\it polarity} $\rho$ of $\PG(n,2)$ is an order-two bijection on its subspaces that reverses containment. That is, for an $m$-space $\Pi_m$ and $m'$-space $\Pi_{m'}$ of $\PG(n,2)$, if $\Pi_m \subset \Pi_{m'}$, then $\Pi_{m'}^\rho \subset \Pi_m^\rho$. In particular, a polarity interchanges $m$-spaces and $(n-1-m)$-spaces. For a general reference on polarities, see \cite[Section 2.1]{Hir1}.

The {\it (binary linear) code} $C(\Gamma)$ of a graph $\Gamma=(V,E)$ is the subspace in the vector space $\mathbb F_2^{|V|}$ generated by the rows of the adjacency matrix of $\Gamma$ modulo 2. The {\it length} $n$ of $C(\Gamma)$ is $|V|$, and the {\it dimension} $k$ of $C(\Gamma)$ is the dimension of $C(\Gamma)$ as a subspace in $\mathbb F_2^{|V|}$. For any vector $w=(w_x)_{x\in V}\in \mathbb F_2^{|V|}$, the {\it weight} $\wt(w)$ of $w$ is $$\wt(w)=|\{x\in V | w_x \neq 0\}|.$$ The {\it minimum weight} $d$ of a code is the minimum of the weight of its non-zero codewords. A binary linear code of length $n$, dimension $k$ and minimum weight $d$ will be referred to as an $[n,k,d]_2$. For any subset $U\subset V$, the {\it characteristic vector of $U$}, denoted by $v^U,$ is the vector $(w_x)_{x\in V}$ where $w_x=1$ if $x\in U$, and $w_x=0$ if $x\notin U$. For a general reference on codes, see \cite{AK}.

For the graph $\Gamma_Q=(V_\Q,E_\Q)$ defined in Section \ref{sc intro}, $C(\Gamma_{\Q})$ is a $[ 2^n+2^{\frac{n-1}{2}},n+1,2^{n-1}]_2$ code if $\Q$ is elliptic, and is a $[ 2^n+2^{\frac{n-1}{2}},n+1,2^{n-1}-2^{\frac{n-1}{2}}]_2$ code if $\Q$ is hyperbolic. A vector $w\in \mathbb F_2^{|V_\Q|}$ is a codeword of $C(\Gamma_{\Q})$ if and only if it is the characteristic vector of $(\PG(n,2)\setminus \Q ) \setminus \Sigma$ for some $(n-1)$-space $\Sigma$ in $\PG(n,2)$. The weight distribution of $C(\Gamma_{\Q})$ is shown in Tables \ref{table weightE} and \ref{table weightH} (see for example \cite{HPV}).

\begin{table}[h!]
\begin{tabular}{|c||c|c|c|}
 \hline
 weight & $0$ & $2^{n-1}$ & $2^{n-1}+ 2^{\frac{n-1}{2}}$ \\ \hline
 number of codewords & 1 & $2^{n}- 2^{\frac{n-1}{2}}-1$ & $2^{n}+ 2^{\frac{n-1}{2}}$ \\
 \hline
\end{tabular}
\vspace{0.3cm}
\caption{Weight distribution of $C(\Gamma_{\Q})$ if $\Q$ is elliptic}\label{table weightE}
\vspace{-0.3cm}
\end{table}

\begin{table}[h!]
\begin{tabular}{|c||c|c|c|}
 \hline
 weight & $0$ & $2^{n-1}- 2^{\frac{n-1}{2}}$ &$2^{n-1}$ \\ \hline
 number of codewords & 1 & $2^{n}- 2^{\frac{n-1}{2}}$ & $2^{n}+ 2^{\frac{n-1}{2}}-1$ \\
 \hline
\end{tabular}
\vspace{0.3cm}
\caption{Weight distribution of $C(\Gamma_{\Q})$ if $\Q$ is hyperbolic}\label{table weightH}
\vspace{-0.3cm}
\end{table}

\section{Two constructions of Godsil-McKay switching sets of $\Gamma_{\Q}$}\label{sc switching set}

In this section, we will prove Theorems \ref{thm A} and \ref{thm B}, which give constructions of Godsil-McKay switching sets of the graph $\Gamma_{\Q}$ defined in Section \ref{sc intro} for quadrics $\Q$ in $\PG(n,2)$.

Theorems \ref{thm A} and \ref{thm B} are as follows.

\begin{thmx}\label{thm A}
Let $\Q$ be a non-singular quadric in $\PG(n,2)$ where $n\geq 5$ is odd. Let $t$ be an integer such that $0 < t \leq \frac{n-3}{2}$, $\alpha$ be a $t$-space in $\Q$, and $\Pi$ be a $(t+1)$-space meeting $\Q$ in exactly $\alpha$. Let $\Gamma_{\Q}$ be as defined in Section \ref{sc intro}. Then
\begin{equation}
S_t:=\Pi \setminus \alpha
\end{equation}
is a Godsil-McKay switching set of $\Gamma_{\Q}$ of size $2^{t+1}$. Let $\Gamma_{\Q,t}$ be the graph obtained by Godsil-McKay switching with switching set $S_t$. Then $\Gamma_{\Q,t}$ is a strongly regular graph with the same parameters as $\Gamma_{\Q}$ (which are listed as in Table \ref{table para}). Furthermore, if $\perp$ is the polarity of $\PG(n,2)$ induced by $\Q$, then
\begin{equation}\label{eqn Ts1}
T_t := (\PG(n,2)\setminus \Q ) \setminus \alpha^\perp
\end{equation}
is the set of vertices in $\Gamma_{\Q}$ outside $S_t$ which have exactly $\frac{1}{2} |S_t|$ neighbours in $S_t$.
\end{thmx}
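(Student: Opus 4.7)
My plan is to reduce the three Godsil--McKay conditions to linear algebra with the quadratic form $f$ cutting out $\Q$ and its polar (symplectic) form $B(x,y)=f(x+y)+f(x)+f(y)$. Let $V\subset\mathbb F_2^{n+1}$ be the $(t+2)$-dimensional subspace corresponding to $\Pi$ and $A$ the $(t+1)$-dimensional subspace corresponding to $\alpha$. Since $\Pi\cap\Q=\alpha$, $f$ vanishes on $A$ and equals $1$ on the other coset $V\setminus A$, whose nonzero vectors are precisely the $2^{t+1}$ points of $S_t$. The single identity doing all the work below is the characteristic-$2$ relation $f(u+x)=B(u,x)$, valid whenever $f(u)=f(x)=1$: it converts the quadratic condition ``the line $ux$ is external to $\Q$'' into the linear condition ``$x\notin u^\perp$''.

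First I would check that $S_t$ induces a regular subgraph: for distinct $x,y\in S_t$ the sum $x+y$ lies in $A\setminus\{0\}$, so the third point of the line $xy$ is in $\alpha\subset\Q$, and no such line is external. Thus the induced subgraph is empty, hence trivially regular. Next I would count the $S_t$-neighbours of a vertex $u\in V_\Q\setminus S_t$; any such $u$ lies outside $\Pi$, so by the key identity the count is $|S_t|-|S_t\cap u^\perp|$. A Grassmann-formula analysis of how the hyperplane $u^\perp$ cuts the flag $\alpha\subset\Pi$ gives the required trichotomy: if $u\in\Pi^\perp$ then $S_t\subset u^\perp$ and the count is $0$; if $u\in\alpha^\perp\setminus\Pi^\perp$ then $\Pi\cap u^\perp$ is a $t$-space containing $\alpha$, hence equals $\alpha$, and the count is $|S_t|$; if $u\notin\alpha^\perp$ then (since $\Pi^\perp\subset\alpha^\perp$) $\Pi\cap u^\perp$ is a $t$-space and $\alpha\cap u^\perp$ is a $(t-1)$-space, giving $|S_t\cap u^\perp|=2^{t+1}-2^t=2^t$ and a count of $\tfrac12|S_t|$.

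Finally, to identify $T_t$ I would check $S_t\subset\alpha^\perp$, after which the vertices outside $S_t$ with $\tfrac12|S_t|$ neighbours will be exactly $V_\Q\setminus\alpha^\perp$. For $v\in S_t$ and $y\in\alpha$, the sum $v+y$ is nonzero and lies in $V\setminus A$, so $f(v+y)=1=f(v)+f(y)$, whence $B(v,y)=0$. Combined with the immediate $\alpha\subset\alpha^\perp$, this gives $\Pi\subset\alpha^\perp$, and therefore $T_t=V_\Q\setminus\alpha^\perp$ is precisely the switching part. The strongly regular conclusion then follows at once from the spectral invariance of Godsil--McKay switching quoted in Section \ref{sc intro}. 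The only real obstacle I anticipate is bookkeeping rather than computation: keeping the four objects $\alpha$, $\Pi$, $\alpha^\perp$, $\Pi^\perp$ and the inclusions $\Pi^\perp\subset\alpha^\perp$ and $\Pi\subset\alpha^\perp$ straight while running the dimension counts, so that the three cases partition $V_\Q\setminus S_t$ cleanly.
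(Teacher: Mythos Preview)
Your argument is correct, and its core is the same trichotomy the paper establishes in Lemma~\ref{lemma partition}: a vertex $u\in V_\Q\setminus S_t$ has $0$, $|S_t|$, or $\tfrac12|S_t|$ neighbours in $S_t$ according as $u\in\Pi^\perp$, $u\in\alpha^\perp\setminus\Pi^\perp$, or $u\notin\alpha^\perp$. The difference is in packaging. The paper first proves the Godsil--McKay condition by a separate geometric argument (Lemma~\ref{lemma GMset}), tracking a tangent from $x$ to a point $z\in\Q$, forming $\Sigma=\langle z,\alpha\rangle$, and invoking the classification of $\Sigma\cap\Q$ from \cite[Theorem~22.8.3]{HirT3}; only afterwards does it set up the polarity partition (Lemmas~\ref{lemma polarity} and~\ref{lemma partition}) to identify $T_t$. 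Your single identity $f(u+x)=B(u,x)$ for $f(u)=f(x)=1$ collapses ``$ux$ is external'' to ``$x\notin u^\perp$'' in one stroke, so the hyperplane--flag dimension count simultaneously verifies the switching condition and pins down $T_t$, making Lemma~\ref{lemma GMset} redundant. What the paper's route buys is that its geometric lemmas are phrased to be reused verbatim for the second switching set $S_{t,t}$ in Theorem~\ref{thm B}; your approach would need the analogous count for two copies of $\Pi$, which is equally routine but has to be redone.
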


\begin{thmx}\label{thm B}
Let $\Q$ be a non-singular quadric in $\PG(n,2)$ where $n\geq 5$ is odd. If $\Q$ is elliptic, then let $t$ be an integer such that $0 < t \leq \frac{n-3}{2}$. If $\Q$ is hyperbolic, then let $t$ be an integer such that $0 < t \leq \frac{n-5}{2}$. In $\PG(n,2)$ where $n\geq 5$ is odd, let $\Q$ be a non-singular quadric. Let $\alpha$ be a $t$-space in $\Q$. Let $\Pi,\Pi'$ be distinct $(t+1)$-spaces meeting $\Q$ in exactly $\alpha$ such that the space spanned by $\Pi$ and $\Pi'$ meet $\Q$ in exactly $\alpha$. Let $\Gamma_{\Q}$ be as defined in Section \ref{sc intro}. Then
\begin{equation}
S_{t,t}:=(\Pi \cup \Pi') \setminus \alpha
\end{equation}
is a Godsil-McKay switching set of $\Gamma_{\Q}$. Let $\Gamma_{\Q,t,t}$ be the graph obtained by Godsil-McKay switching with switching set $S_{t,t}$. Then $\Gamma_{\Q,t,t}$ is a strongly regular graph with the same parameters as $\Gamma_{\Q}$ (these are listed as in Table \ref{table para}). Furthermore, if $\perp$ is the polarity of $\PG(n,2)$ induced by $\Q$, then
\begin{equation}\label{eqn Tss1}
T_{t,t}=T_t \cup [ ((\Pi^\perp \triangle \Pi'^\perp)\setminus S_{t,t} )\setminus \Q]
\end{equation}
is the set of vertices in $\Gamma_{\Q}$ outside $S_{t,t}$ which have exactly $|\frac{1}{2} S_{t,t}|$ neighbours in $S_{t,t}$, where $\triangle$ is the symmetric difference.
\end{thmx}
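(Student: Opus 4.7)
The plan is to decompose $S_{t,t}$ as the disjoint union $S_t \sqcup S_t'$, where $S_t = \Pi \setminus \alpha$ and $S_t' = \Pi' \setminus \alpha$ are the switching sets of Theorem~\ref{thm A} built from $\Pi$ and $\Pi'$ with the common underlying $t$-space $\alpha \subset \Q$. The Godsil-McKay conditions for $S_{t,t}$ then reduce to two applications of Theorem~\ref{thm A} plus a single edge-count between $S_t$ and $S_t'$. Note $|S_t| = |S_t'| = 2^{t+1}$ and $|S_{t,t}| = 2^{t+2}$.

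The key new input is that the bipartite subgraph of $\Gamma_\Q$ between $S_t$ and $S_t'$ is complete. Let $f$ be a quadratic form defining $\Q$, so that for $u,w\in\PG(n,2)\setminus\Q$, $u\sim w$ in $\Gamma_\Q$ iff $f(u+w)=1$. Pick representatives $p\in\Pi\setminus\alpha$ and $p'\in\Pi'\setminus\alpha$; a typical $x\in S_t$ has the form $a+p$ with $a\in\alpha\cup\{0\}$, and a typical $y\in S_t'$ has the form $a'+p'$. Then $x+y = (a+a')+(p+p')$ lies in $\langle\Pi,\Pi'\rangle$ but not in $\alpha$: if it were, then $p+p'\in\alpha$ would force $p'\in\Pi\cap\Pi'=\alpha$ (using that the distinct $(t+1)$-spaces $\Pi,\Pi'$ both contain the $t$-space $\alpha$), a contradiction. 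By the hypothesis $\langle\Pi,\Pi'\rangle\cap\Q=\alpha$ we conclude $f(x+y)=1$, so $x\sim y$.

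From this, regularity of the induced subgraph on $S_{t,t}$ is immediate: Theorem~\ref{thm A} makes each of $S_t$ and $S_t'$ induced-regular of a common valency, and each vertex of either part picks up a further $2^{t+1}$ neighbours in the opposite part. For the neighbourhood condition, take $x\in(\PG(n,2)\setminus\Q)\setminus S_{t,t}$ and write
\[
N(x,S_{t,t}) = N(x,S_t) + N(x,S_t').
\]
Theorem~\ref{thm A} gives $N(x,S_t)\in\{0,2^t,2^{t+1}\}$, with the middle value occurring iff $x\in T_t=(\PG(n,2)\setminus\Q)\setminus\alpha^\perp$; the same holds for $N(x,S_t')$ with the same $T_t$, since $T_t$ depends only on $\alpha$. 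Hence for $x\in T_t$ both contributions equal $2^t$, summing to $2^{t+1}$, while for $x\notin T_t$ each is $0$ or $2^{t+1}$ and the total lies in $\{0,2^{t+1},2^{t+2}\}$. All totals lie in $\{0,\tfrac{1}{2}|S_{t,t}|,|S_{t,t}|\}$, so $S_{t,t}$ is a Godsil-McKay switching set, and the Godsil-McKay theorem quoted in the introduction gives $\Gamma_{\Q,t,t}$ strongly regular with the parameters of $\Gamma_\Q$.

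To pin down $T_{t,t}$, I would use the more refined information, implicit in the proof of Theorem~\ref{thm A} via the bilinear form polarising $f$, that for $x\in\alpha^\perp\setminus(\Q\cup S_t)$ one has $N(x,S_t)=0$ iff $x\in\Pi^\perp$, and $N(x,S_t)=2^{t+1}$ otherwise; the analogue holds for $S_t'$ and $\Pi'$. Consequently, for $x\notin T_t$ (equivalently $x\in\alpha^\perp$) the sum $N(x,S_{t,t})=2^{t+1}$ precisely when exactly one of $x\in\Pi^\perp$, $x\in\Pi'^\perp$ holds, i.e.\ $x\in\Pi^\perp\triangle\Pi'^\perp$. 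Because $\Pi^\perp,\Pi'^\perp\subseteq\alpha^\perp$ (as $\alpha\subseteq\Pi,\Pi'$), this symmetric difference lies in $\alpha^\perp$ and is therefore disjoint from $T_t$; assembling the two disjoint cases and restricting away from $\Q$ and $S_{t,t}$ gives formula~(\ref{eqn Tss1}). The main obstacle is the complete bipartite step of the second paragraph, which is where the distinctive hypothesis $\langle\Pi,\Pi'\rangle\cap\Q=\alpha$ is essential; everything else is bookkeeping on top of Theorem~\ref{thm A} and the standard Godsil-McKay conclusion.
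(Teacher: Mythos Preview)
Your proof is correct and follows essentially the same approach as the paper: both arguments establish the complete bipartite structure between $S_t$ and $S_t'$ from the hypothesis $\langle\Pi,\Pi'\rangle\cap\Q=\alpha$ (the paper's Lemma~\ref{lemma subgraph2}), and both obtain the neighbour counts by applying the trichotomy of Lemma~\ref{lemma partition} separately with respect to $\Pi$ and to $\Pi'$. Your presentation is slightly more modular---you invoke Theorem~\ref{thm A} as a black box and exploit that $T_t$ depends only on $\alpha$, whereas the paper writes out the resulting seven-case table explicitly---but the underlying computation and the derivation of $T_{t,t}$ are the same.
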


\begin{remark}
In both Theorems \ref{thm A} and \ref{thm B}, $t\leq\frac{n-3}{2}$ or $t\leq\frac{n-5}{2}$. This is a necessary and sufficient condition for the existence of $\alpha$, $\Pi$ and $\Pi'$ by \cite[Theorem 22.8.3]{HirT3}.
\end{remark}

\begin{remark}
In Theorem \ref{thm B}, by the dimension theorem for subspaces, the space $\left<\Pi,\Pi'\right>$ spanned by $\Pi$ and $\Pi'$ is an $(t+2)$-subspace. By \cite[Theorem 3.1]{Hir1}, there are exactly three planes through $\alpha$ in $\left<\Pi,\Pi'\right>$. Let $\Pi''$ be the plane through $\alpha$ other than $\Pi$ and $\Pi'$. Since $\Q$ is a quadric, either $\left<\Pi,\Pi'\right> \cap \Q = \Pi''$ or $\left<\Pi,\Pi'\right> \cap \Q = \alpha$ holds by \cite[Theorem 22.8.3]{HirT3}. In the former case, since $\left<\Pi,\Pi'\right>$ is one dimension higher than that of $\Pi''$, $(\Pi \cup \Pi') \setminus \alpha = \left<\Pi,\Pi'\right> \setminus \Pi''$ is a Godsil-McKay switching set of $\Gamma_{\Q}$ by Theorem \ref{thm A}. The latter case is treated in Theorem \ref{thm B}.
\end{remark}

Throughout this article, we will work under the assumptions of Theorems \ref{thm A} or \ref{thm B}. In particular, the symbols $n$,$\Q$, $\perp$, $t$, $\alpha$, $\Pi$, $\Pi'$, $\Gamma_{\Q}$, $\Gamma_{\Q,t}$, $\Gamma_{\Q,t,t}$, $S_t$, $S_{t,t}$, $T_t$ and $T_{t,t}$ are preserved as defined in Theorems \ref{thm A} or \ref{thm B}.

We first check the size of switching sets described in Theorems \ref{thm A} and \ref{thm B}.

\begin{lemma}\label{lemma size}
The size $|S_t|$ of $S_t$ and the size $|S_{t,t}|$ of $S_{t,t}$ are respectively $2^{t+1}$ and $2^{t+2}$.
\end{lemma}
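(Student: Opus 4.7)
The plan is to simply count points, using the standard fact stated in Section \ref{sc terminology} that an $m$-space in $\PG(n,2)$ contains $2^{m+1}-1$ points.

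First I would handle $|S_t|$. Since $\Pi$ is a $(t+1)$-space and $\alpha\subset\Pi$ is a $t$-space, and $S_t=\Pi\setminus\alpha$, we get
\[
|S_t|=(2^{t+2}-1)-(2^{t+1}-1)=2^{t+1}.
\]

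For $|S_{t,t}|$, the key preliminary step is to identify $\Pi\cap\Pi'$. As noted in the second remark preceding the lemma, the dimension theorem gives $\dim\langle\Pi,\Pi'\rangle=t+2$, so
\[
\dim(\Pi\cap\Pi')=\dim\Pi+\dim\Pi'-\dim\langle\Pi,\Pi'\rangle=(t+1)+(t+1)-(t+2)=t.
\]
Since $\alpha\subseteq\Pi\cap\Pi'$ by hypothesis and $\alpha$ itself has dimension $t$, we conclude $\Pi\cap\Pi'=\alpha$. Then inclusion--exclusion gives
\[
|\Pi\cup\Pi'|=2(2^{t+2}-1)-(2^{t+1}-1)=2^{t+3}-2^{t+1}-1,
\]
and subtracting $|\alpha|=2^{t+1}-1$ yields
\[
|S_{t,t}|=2^{t+3}-2\cdot 2^{t+1}=2^{t+2}.
\]

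There is no real obstacle here; the only subtle point is justifying $\Pi\cap\Pi'=\alpha$, which follows immediately from the dimension computation together with the assumption that $\Pi\ne\Pi'$ both contain $\alpha$. The rest is pure arithmetic in $\mathbb F_2$-point counts.
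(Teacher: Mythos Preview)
Your proof is correct and follows essentially the same approach as the paper's: both obtain $|S_t|$ and $|S_{t,t}|$ by direct point counts after noting $\Pi\cap\Pi'=\alpha$. One small presentational point: invoking the second remark (which already uses the dimension theorem, hence implicitly $\Pi\cap\Pi'=\alpha$, to get $\dim\langle\Pi,\Pi'\rangle=t+2$) in order to deduce $\Pi\cap\Pi'=\alpha$ is circular; the direct argument you allude to at the end---that $\Pi\ne\Pi'$ forces $\Pi\cap\Pi'$ to be a proper subspace of $\Pi$, hence of dimension at most $t$, and it contains the $t$-space $\alpha$---is what actually does the work.
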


\begin{proof}
Since $\Pi$ is a $(t+1)$-space and $\alpha$ is a $t$-space, $$|S_t|=|\Pi \setminus \alpha|=(2^{t+2}-1)-(2^{t+1}-1)=2^{t+1}.$$ For $S_{t,t}$, because of $\Pi \cap \Pi' = \alpha$, we have $$|S_{t,t}|=|(\Pi \cup \Pi') \setminus \alpha|=2(2^{t+2}-1)-(2^{t+1}-1)-(2^{t+1}-1)=2^{t+2}.$$
\end{proof}

We now determine the structure of the subgraphs of $\Gamma_{\Q}$ with vertex sets $S_t$ and $S_{t,t}$ respectively.

\begin{lemma}\label{lemma subgraph1}
The subgraph of $\Gamma_{\Q}$ with vertex set $S_t$ is null.
\end{lemma}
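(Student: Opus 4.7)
The plan is to show that for any two distinct vertices $x,y \in S_t$, the third point of the line $xy$ lies in $\Q$, so $xy$ is not external to $\Q$ and hence $x,y$ are non-adjacent in $\Gamma_{\Q}$.

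First I would pass from projective to vector-space language. Let $V \subseteq \mathbb{F}_2^{n+1}$ be the $(t+2)$-dimensional subspace whose nonzero vectors are the points of $\Pi$, and let $W \subseteq V$ be the $(t+1)$-dimensional subspace whose nonzero vectors are the points of $\alpha$. Then $S_t = \Pi \setminus \alpha$ corresponds to $V \setminus W$, and $W$ has codimension one in $V$.

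Now pick any two distinct $x,y \in S_t$. As points of $\PG(n,2)$, the line $xy$ has exactly three points: $x$, $y$, and $x+y$. Since $W$ has codimension one in $V$, the quotient $V/W$ is one-dimensional and $V \setminus W$ is precisely the nontrivial coset of $W$. Both $x$ and $y$ lie in this coset, so $x+y \in W$; and because $x \neq y$ we have $x+y \neq 0$, so $x+y$ represents a point of $\alpha$. Since $\alpha \subseteq \Q$, the line $xy$ meets $\Q$ and is therefore not an external line. By the definition of $\Gamma_{\Q}$, this means $x$ and $y$ are not adjacent. As $x,y$ were arbitrary, the subgraph induced on $S_t$ has no edges.

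There is no serious obstacle here: the whole argument rests on the single observation that $\alpha$ is a hyperplane in $\Pi$, so $\mathbb{F}_2$-linearity forces the sum of any two points outside $\alpha$ back into $\alpha$. The hypothesis $\Pi \cap \Q = \alpha$ is then used only implicitly, through the fact that $\alpha \subseteq \Q$.
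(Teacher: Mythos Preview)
Your proof is correct and follows essentially the same idea as the paper's: since $\alpha$ is a hyperplane of $\Pi$, any line in $\Pi$ through two points of $S_t=\Pi\setminus\alpha$ must meet $\alpha\subseteq\Q$, hence is not external. The paper phrases this geometrically (``a line in $\Pi$ either lies in $\alpha$ or is tangent to $\alpha$, and thus to $\Q$''), while you spell out the same fact via the coset structure of $V/W$ over $\mathbb{F}_2$; the arguments are equivalent.
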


\begin{proof}
Since $\alpha$ is one dimension less than that of $\Pi$, a line in $\Pi$ either lies in $\alpha$ or is tangent to $\alpha$, and thus to $\Q$. In other words, no two vertices are joined in $\Gamma_{\Q}$.
\end{proof}

\begin{lemma}\label{lemma subgraph2}
The subgraph of $\Gamma_{\Q}$ with vertex set $S_{t,t}$ is a regular subgraph of degree $2^{t+1}$.
\end{lemma}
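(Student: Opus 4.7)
The plan is to exhibit the induced subgraph on $S_{t,t}$ as the complete bipartite graph $K_{2^{t+1},2^{t+1}}$ with parts $\Pi\setminus\alpha$ and $\Pi'\setminus\alpha$; regularity of degree $2^{t+1}$ is then immediate.

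First, by the dimension formula applied to the $(t+1)$-spaces $\Pi,\Pi'$ inside the $(t+2)$-space $\langle\Pi,\Pi'\rangle$ (see the remark preceding this lemma), we have $\dim(\Pi\cap\Pi')=2(t+1)-(t+2)=t$, so $\Pi\cap\Pi'=\alpha$. Consequently $S_{t,t}=(\Pi\setminus\alpha)\sqcup(\Pi'\setminus\alpha)$, a disjoint union of two sets each of size $2^{t+1}$ (as in the proof of Lemma \ref{lemma size}). Applying Lemma \ref{lemma subgraph1} separately to $\Pi$ and to $\Pi'$ shows that neither of these parts carries any edge of $\Gamma_\Q$, so the induced subgraph on $S_{t,t}$ is bipartite with the displayed bipartition.

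The substantive step is to verify that every $x\in\Pi\setminus\alpha$ and every $y\in\Pi'\setminus\alpha$ are adjacent in $\Gamma_\Q$, equivalently that the line $xy$ is external to $\Q$. Let $z=x+y$ be the third point of this line in $\PG(n,2)$. Since $x,y\in\langle\Pi,\Pi'\rangle$, so does $z$. If $z\in\Pi$, then $y=x+z\in\Pi$, forcing $y\in\Pi\cap\Pi'=\alpha$, contradicting $y\in\Pi'\setminus\alpha$; similarly $z\notin\Pi'$. Hence $z\in\langle\Pi,\Pi'\rangle\setminus(\Pi\cup\Pi')$. By the standing hypothesis of Theorem \ref{thm B}, $\langle\Pi,\Pi'\rangle\cap\Q=\alpha$, and since $\alpha\subset\Pi$, we conclude $z\notin\Q$. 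Combined with $x,y\notin\Q$ (both lie in $V_\Q$), the line $xy$ contains no point of $\Q$, so it is external to $\Q$ and $xy$ is an edge of $\Gamma_\Q$. Thus every cross-pair contributes an edge, giving each vertex degree exactly $2^{t+1}$ within $S_{t,t}$.

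The only real obstacle is the third-point argument above; everything else is bookkeeping on dimensions and direct appeals to Lemma \ref{lemma subgraph1}. The decisive input is the hypothesis $\langle\Pi,\Pi'\rangle\cap\Q=\alpha$, which is precisely what distinguishes the setting of Theorem \ref{thm B} from the alternative case (handled in the remark by reduction to Theorem \ref{thm A}) in which the intersection equals the third $(t+1)$-space through $\alpha$.
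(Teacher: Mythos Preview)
Your proof is correct and follows essentially the same approach as the paper's: both argue that a vertex in $\Pi\setminus\alpha$ has no neighbours in $\Pi\setminus\alpha$ (via Lemma~\ref{lemma subgraph1}) and is adjacent to every vertex of $\Pi'\setminus\alpha$ because $\langle\Pi,\Pi'\rangle\cap\Q=\alpha$. You spell out the third-point argument showing $z=x+y\notin\alpha$ more explicitly than the paper does, and you add the pleasant observation that the induced subgraph is precisely $K_{2^{t+1},2^{t+1}}$, but the underlying logic is the same.
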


\begin{proof}
Let $x$ be a point in $S_{t,t}$. Without loss of generality, assume $x \in \Pi$. We count the number of neighbours of $x$. By the same argument used in Lemma \ref{lemma subgraph1}, $x$ is not adjacent to any vertex in $\Pi \setminus \alpha$.

Since the span of $\Pi$ and $\Pi'$ meets $\Q$ in exactly $\alpha$ by assumption, any line through $x$ and a point in $\Pi' \setminus \alpha$ is an external line of $\Q$. In other words, the vertex is adjacent to any vertex in $\Pi' \setminus \alpha$. Since the size of $\Pi' \setminus \alpha$ is $2^{t+1}$, the result follows.
\end{proof}

\begin{lemma}\label{lemma GMset}
$S_t$ is a Godsil-McKay switching set of $\Gamma_{\Q}$.
\end{lemma}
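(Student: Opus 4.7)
The plan is to verify directly the two defining conditions of a Godsil--McKay switching set: (a) the induced subgraph on $S_t$ is regular, and (b) every vertex $x \in V_\Q \setminus S_t$ has exactly $0$, $|S_t|$, or $|S_t|/2 = 2^t$ neighbours in $S_t$. Condition (a) is immediate from Lemma \ref{lemma subgraph1}, since a null graph is $0$-regular.

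For (b), I would first reformulate adjacency via the polarity $\perp$. Let $q$ denote the quadratic form cutting out $\Q$ and $B$ its associated bilinear form. For $x, y \in V_\Q$ we have $q(x) = q(y) = 1$, and the third point of the line $xy$ is $x+y$ with $q(x+y) = q(x) + q(y) + B(x,y) = B(x,y)$; hence $x$ and $y$ are adjacent in $\Gamma_\Q$ iff $B(x,y) = 1$, i.e., iff $y \notin x^\perp$. Consequently, for any $x \in V_\Q \setminus S_t$,
\begin{equation*}
|\{ y \in S_t : y \sim x\}| = |S_t| - |S_t \cap x^\perp| = |S_t| - |\Pi \cap x^\perp| + |\alpha \cap x^\perp|.
\end{equation*}

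Using the inclusion $\Pi^\perp \subseteq \alpha^\perp$ (coming from $\alpha \subseteq \Pi$), I would split into three cases: (i) if $x \in \Pi^\perp$ then $\Pi \subseteq x^\perp$, so $|S_t \cap x^\perp| = |S_t|$ and $x$ has $0$ neighbours in $S_t$; (ii) if $x \in \alpha^\perp \setminus \Pi^\perp$, then $\Pi \cap x^\perp$ is a $t$-space containing $\alpha$ and hence equal to $\alpha$, giving $|S_t \cap x^\perp| = 0$ and $|S_t|$ neighbours; (iii) if $x \notin \alpha^\perp$, then $\Pi \cap x^\perp$ is a $t$-space and $\alpha \cap x^\perp$ is a $(t-1)$-space, so $|S_t \cap x^\perp| = (2^{t+1}-1)-(2^t-1) = 2^t = |S_t|/2$. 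These three cases exhaust all possibilities for $x \in V_\Q \setminus S_t$ and confirm (b).

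The main technical step is the case analysis in (ii) and (iii), which rests on the elementary fact that a hyperplane either contains a given subspace or meets it in a hyperplane of that subspace. No serious obstacle is anticipated: the only care required is in the dimension bookkeeping and in checking that the containment $\Pi^\perp \subseteq \alpha^\perp$ indeed makes the three cases exhaustive.
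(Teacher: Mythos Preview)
Your proof is correct, but it takes a different route from the paper's proof of this lemma. The paper argues geometrically: given $x\in V_\Q\setminus S_t$ and a tangent line $xy$ with $y\in S_t$, it takes the tangent point $z\in\Q$, forms $\Sigma=\langle z,\alpha\rangle$, and invokes \cite[Theorem 22.8.3]{HirT3} to say that $\Sigma\cap\Q$ is either all of $\Sigma$ or a pair of $t$-spaces; each alternative then yields the neighbour count $0$ or $2^t$. You instead reformulate adjacency algebraically via the identity $q(x+y)=q(x)+q(y)+B(x,y)$ and do a straight hyperplane-intersection count, splitting on whether $x\in\Pi^\perp$, $x\in\alpha^\perp\setminus\Pi^\perp$, or $x\notin\alpha^\perp$.

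Your case analysis is in fact the content of the paper's \emph{subsequent} Lemma~\ref{lemma partition} (there phrased in terms of tangent/external lines and the $t$-spaces $\pi\subset\Pi$, using \cite[Lemma 22.3.3]{HirT3} as the bridge between ``tangent'' and ``$\perp$''). So you have effectively merged Lemmas~\ref{lemma GMset} and~\ref{lemma partition} into one argument, and your case~(iii) already identifies $T_t$ as $V_\Q\setminus\alpha^\perp$, matching \eqref{eqn Ts1}. This is a modest streamlining; the paper's version, on the other hand, keeps the quadric-geometric flavour throughout and does not need to unpack the bilinear form explicitly.
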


\begin{proof}
By Lemma \ref{lemma subgraph1}, the subgraph of $\Gamma_{\Q}$ with vertex set $S_t$ is null.

By Lemma \ref{lemma size}, $|S_t|=2^{t+1}$. Let $x$ be a point in $(\PG(n,2) \setminus \Q )\setminus S_{t}$. It suffices to show that either any line joining $x$ and a point of $S_t$ meets $\Q$, or there are exactly $2^t$ or $2^{t+1}$ points $y$ in $S_t$ such that the line $xy$ is an external line to $\Q$. Since any line in $\PG(n,2)$ has exactly three points, any line through two external points of $\Q$ is either a tangent or an external line. Thus, it also suffices to show that either any line joining $x$ and a point of $S_t$ is an external line, or there are exactly $2^{t+1}$, $2^t$ points $y$ in $S_t$ such that the line $xy$ is tangent to $\Q$.

Suppose the line $xy$ joining $x$ and a point $y$ of $S_t$ is tangent to $\Q$.
Since every line has only three points, the unique point $z$ on $xy$ other than $x$ and $y$ is a point of $\Q$. Let $\Sigma$ be the $(t+1)$-space spanned by $z$ and $\alpha$. Then $\Sigma$ meets $\Q$ in the $t$-space $\alpha$ and at least one point not in $\alpha$, namely $z$. By \cite[Theorem 22.8.3]{HirT3}, $\Sigma$ either lies in $\Q$ or meets $\Q$ in exactly two $t$-spaces.

If $\Sigma$ lies in $\Q$, then every line through $x$ and a point of $S_t$ is a tangent to $\Q$, and we are done.

If $\Sigma$ meets $\Q$ in exactly two $t$-spaces, say $\alpha$ and $\alpha'$, then $\alpha$ and $\alpha'$ meet in a ($t-1$)-space. Then a line $xy'$ through $x$ and a point $y'$ of $S_t$ is tangent to $\Q$ if and only if $y'$ is in $S_t \cap \left<x,\alpha'\right>$. Since
\[
\begin{split}
S_t \cap \left<x,\alpha'\right>
&=(\Pi \setminus \alpha )\cap \left<x,\alpha'\right>\\
&=(\Pi \cap \left<x,\alpha'\right>) \setminus (\alpha \cap \left<x,\alpha'\right> )\\
&=\left<y,\alpha \cap \alpha'\right> \setminus (\alpha \cap \alpha')\\
\end{split}
\]
has $(2^{t+1}-1)-(2^{t}-1)=2^{t}$ points, the result follows.
\end{proof}

We need to make use of a property of $\perp$ to prove the following two lemmas. Recall from \cite[Lemma 22.3.3]{HirT3} that, for any point $y\in \Q$, $y^\perp$ comprises the points on the tangents to $\Q$ at $y$ and the lines in $\Q$ through $y$; for any point $y\notin \Q$, $y^\perp$ consists of the points on the tangents to $\Q$ through $y$.

\begin{lemma}\label{lemma SinPiperp}
The following inclusions hold:
\begin{enumerate}
  \item $S_t\subset \Pi^\perp \subset \alpha^\perp$.
  \item $S_{t,t}\subset \Pi^\perp \triangle \Pi'^\perp \subset \alpha^\perp$.
\end{enumerate}
\end{lemma}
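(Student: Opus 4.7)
The plan is to prove each chain of inclusions separately. In both parts the right-hand containment (into $\alpha^\perp$) is formal: it follows from $\alpha \subset \Pi$ (and $\alpha \subset \Pi'$) together with the order-reversing property of $\perp$, which gives $\Pi^\perp \cup \Pi'^\perp \subset \alpha^\perp$ and hence also $\Pi^\perp \triangle \Pi'^\perp \subset \alpha^\perp$. The substantive step in each part is the left-hand containment, and for this I plan to use the tangent-line description of $y^\perp$ recalled just before the lemma statement.

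For part (1), I would show that every $y \in S_t = \Pi \setminus \alpha$ satisfies $\Pi \subset y^\perp$ by arguing that every line in $\Pi$ through $y$ is a tangent to $\Q$, and then invoking \cite[Lemma 22.3.3]{HirT3}. For a point $p \in \alpha$, the third point of the line $yp$ lies in $\Pi$ and cannot lie in $\alpha$ (otherwise $y$ would also lie in the subspace $\alpha$), so the line meets $\Q$ exactly in $\{p\}$ and is tangent at $p$. For a second point $y' \in \Pi \setminus \alpha$ distinct from $y$, the third point $r = y + y'$ lies in $\Pi$; since $\alpha$ is a hyperplane of $\Pi$, the sum of two representatives outside $\alpha$ lies in $\alpha$, so $r \in \alpha \subset \Q$ and the line is tangent at $r$. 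In either case $y$ lies on a tangent through the other point of $\Pi$, hence in its polar, yielding $\Pi \subset y^\perp$, i.e., $y \in \Pi^\perp$.

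For part (2), by symmetry it suffices to show $\Pi \setminus \alpha \subset \Pi^\perp \setminus \Pi'^\perp$; part (1) supplies the $\Pi^\perp$ half. The crux, which I expect to be the main obstacle, is to rule out $y \in \Pi'^\perp$ for $y \in \Pi \setminus \alpha$. Assuming the contrary, pick any $y' \in \Pi' \setminus \alpha$; then $y \in y'^\perp$ forces the line $yy'$ to be a tangent to $\Q$ (using $y' \notin \Q$), with point of tangency $r = y + y' \in \left<\Pi,\Pi'\right> \cap \Q$. The hypothesis of Theorem~\ref{thm B} asserts that this intersection is exactly $\alpha$, so $r \in \alpha \subset \Pi'$, and hence $y = y' + r \in \Pi'$. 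Combined with $y \in \Pi$ this gives $y \in \Pi \cap \Pi'$; a short dimension count for the distinct $(t+1)$-spaces $\Pi, \Pi'$ lying inside the $(t+2)$-space $\left<\Pi,\Pi'\right>$ and both containing the $t$-space $\alpha$ forces $\Pi \cap \Pi' = \alpha$, contradicting $y \in \Pi \setminus \alpha$. The subtlety I want to emphasize is that this argument genuinely uses the full hypothesis $\left<\Pi,\Pi'\right> \cap \Q = \alpha$ of Theorem~\ref{thm B}, not merely the weaker individual conditions $\Pi \cap \Q = \Pi' \cap \Q = \alpha$.
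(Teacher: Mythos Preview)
Your proof is correct and follows essentially the same approach as the paper's. The paper phrases step (2) slightly differently---it argues directly that every line joining a point of $\Pi\setminus\alpha$ to a point of $\Pi'\setminus\alpha$ is external (since any intersection with $\Q$ would lie in $\left<\Pi,\Pi'\right>\cap\Q=\alpha$, forcing the contradiction you derive), and then concludes $(\Pi\setminus\alpha)\cap\Pi'^\perp=\emptyset$ from \cite[Lemma 22.3.3]{HirT3}---but this is just the contrapositive of your argument, and the paper's proof of $\Pi\subset\Pi^\perp$ is the same tangent-line observation you spell out in part (1).
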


\begin{proof}
Since $\alpha$ is a subset of $\Pi$ and $\Pi'$, by the definition of a polarity,
$\Pi^\perp$ and $\Pi'^\perp$ are subsets of $\alpha^\perp$.

Since the line through any two points of $\Pi$ is either a tangent of $\Q$ or a line of $\Q$. By \cite[Lemma 22.3.3]{HirT3}, $\Pi$ is a subset of $\Pi^\perp$. Similarly, $\Pi'$ is a subset $\Pi'^\perp$. Hence $S_t \subset \Pi^\perp$ and $S_{t,t}\subset \Pi^\perp\cup\Pi'^\perp$.

As stated in Theorem \ref{thm B}, the space spanned by $\Pi$ and $\Pi'$ meets $\Q$ in exactly $\alpha$. Thus, any line through joining a point of $\Pi\setminus \alpha$ and a point of $\Pi'\setminus \alpha$ is an external line of $\Q$. By \cite[Lemma 22.3.3]{HirT3}, $\Pi \cap \Pi'^\perp=\emptyset$ and $\Pi^\perp \cap \Pi' =\emptyset$, and so $S_{t,t}\cap \Pi^\perp\cap\Pi'^\perp =\emptyset$. The result follows.
\end{proof}

To determine $T_t$, we prepare a lemma about polarities in $\PG(n,2)$.

\begin{lemma}\label{lemma polarity}
Let $\rho$ be a polarity of $\PG(n,2)$. Let $\Sigma$ be an $(m+1)$-space of $\PG(n,2)$ where $0\leq m<n-1$. Let $x$ be a point in $\PG(n,2)$. Then exactly one of the following cases occurs.
\begin{enumerate}
 \item $x$ is in $\Sigma^\rho$.
 \item $x$ is in $\pi^\rho \setminus \Sigma^\rho$ for exactly one $m$-space $\pi$ in $\Sigma$.
\end{enumerate}
\end{lemma}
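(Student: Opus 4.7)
The plan is to reduce everything to the standard duality identity: for the polarity $\rho$ and any subspace $\pi$, one has $x\in \pi^\rho$ if and only if $\pi\subseteq x^\rho$. Since $x$ is a single point, $x^\rho$ is an $(n-1)$-space, i.e.\ a hyperplane. So the question becomes: how many $m$-subspaces $\pi$ of the fixed $(m+1)$-space $\Sigma$ are contained in the hyperplane $x^\rho$? I would analyse this through the intersection $\Sigma\cap x^\rho$.

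By the dimension formula for projective subspaces, $\Sigma\cap x^\rho$ is either all of $\Sigma$ or an $m$-space. The first situation, $\Sigma\subseteq x^\rho$, is equivalent (again by the polarity identity) to $x\in\Sigma^\rho$, which is Case (1); and in that case $x\in\pi^\rho$ for every $m$-subspace $\pi$ of $\Sigma$, so $x$ never lies in $\pi^\rho\setminus\Sigma^\rho$, showing that Case (2) cannot simultaneously hold. The second situation gives $x\notin\Sigma^\rho$, and any $m$-space $\pi\subseteq\Sigma$ with $\pi\subseteq x^\rho$ must equal $\Sigma\cap x^\rho$ on dimensional grounds. Hence there is exactly one such $\pi$, and for it $x\in\pi^\rho\setminus\Sigma^\rho$, giving Case (2).

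Putting the two cases together yields the stated trichotomy, and the mutual exclusivity is immediate from the set-theoretic definition of $\pi^\rho\setminus\Sigma^\rho$ in Case (2). I do not expect any real obstacle here: the whole argument is a one-line dimension count for $\Sigma\cap x^\rho$ packaged through the order-reversing bijection $\rho$. The only thing worth stating carefully is that $x^\rho$ is genuinely a hyperplane (which uses $m<n-1$, so that $\Sigma^\rho$ is a proper subspace and the picture makes sense) and that the polarity is an order-two containment-reversing bijection, both of which are in the definition recalled in Section~\ref{sc terminology}.
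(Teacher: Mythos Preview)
Your argument is correct, and it takes a genuinely different route from the paper. The paper proves the lemma by a counting argument: it lists the $N=2^{m+2}-1$ $m$-spaces $\pi_1,\ldots,\pi_N$ of $\Sigma$, observes that the $(n-1-m)$-spaces $\pi_i^\rho$ pairwise intersect in exactly $\Sigma^\rho$ (since $\langle\pi_i,\pi_j\rangle=\Sigma$), and then checks that $|\Sigma^\rho|+\sum_i|\pi_i^\rho\setminus\Sigma^\rho|=2^{n+1}-1$, so these sets partition all of $\PG(n,2)$. Your approach instead dualises immediately via $x\in\pi^\rho\Leftrightarrow\pi\subseteq x^\rho$ and reduces the question to the dimension of $\Sigma\cap x^\rho$, which is either $\Sigma$ itself or a single $m$-space. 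This is shorter and more conceptual; the paper's version has the mild advantage of making the global partition of $\PG(n,2)$ into $\Sigma^\rho$ and the sets $\pi_i^\rho\setminus\Sigma^\rho$ explicit, which is exactly how the lemma is used downstream (e.g.\ in Lemma~\ref{lemma partition} and the proof of Theorem~\ref{thm A}). One small quibble: your parenthetical that ``$x^\rho$ is genuinely a hyperplane (which uses $m<n-1$\ldots)'' is misplaced --- $x^\rho$ is a hyperplane simply because $x$ is a point, independent of $m$; the hypothesis $m<n-1$ is only needed so that $\Sigma$ is a proper subspace and the statement is non-vacuous.
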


\begin{proof}
By \cite[Theorem 3.1]{Hir1}, there are exactly $N=2^{m+2}-1$ $m$-spaces in $\Sigma$. Let $\pi_1, \pi_2, \cdots , \pi_{N}$ be the $m$-spaces contained in $\Sigma$. Since $\rho$ is a polarity, $\pi_i^\rho$, $i=1,2,\cdots,N$, are $(n-1-m)$-spaces containing the $(n-2-m)$-space $\Sigma^\rho$. For distinct $i,j\in\{1,2,\cdots,N\}$, $\pi_i^\rho \cap \pi_j^\rho = \left<\pi_i,\pi_j\right>^\rho=\Sigma^\rho$. Thus, the number of points in $\bigcup_{i=1}^{N}\pi_i^\rho$ is
$ \left| \Sigma^\rho \right|+\sum_{i=1}^{N} \left|\pi_i^\rho \setminus \Sigma^\rho \right|
=\ (2^{n-1-m}-1)+N [(2^{n-m}-1)- (2^{n-1-m}-1)]
=\ 2^{n+1}-1,$
which is the number of points in $\PG(n,2)$. Now, the result follows.
\end{proof}

\begin{lemma}\label{lemma partition}
Let $x$ be a point not in $\Q$.
Then exactly one of the following cases occurs.
\begin{enumerate}
 \item $x$ is in $\Pi^\perp$; any line joining $x$ and a point in $S_t$
is not an external line of $\Q$.
 \item $x$ is in $\pi^\perp\setminus \Pi^\perp $ for exactly one $t$-space $\pi\neq \alpha$ in $\Pi$; the line $xy$ through $x$ and a point $y \in S_t$ is an external line of $\Q$ if and only if $y \notin \pi $.
 Furthermore, there are $2^{t}$ such points $y$.
 \item $x$ is in $\alpha^\perp\setminus \Pi^\perp $; any line through a point of $S_t$ and $x$ is an external line of $\Q$.
\end{enumerate}
\end{lemma}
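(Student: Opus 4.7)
The plan is to apply Lemma~\ref{lemma polarity} with $\Sigma=\Pi$ (so $m=t$) and then, in each resulting case, decide which $y\in S_t$ produce an external line $xy$. The key reduction is that since both $x$ and $y$ lie outside $\Q$, the line $xy$ (having only three points in $\PG(n,2)$) is either tangent or external to $\Q$, and by \cite[Lemma~22.3.3]{HirT3} it is tangent precisely when $y\in x^\perp$. Thus the task is to identify $x^\perp\cap\Pi$ and compare it with $S_t=\Pi\setminus\alpha$.

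Lemma~\ref{lemma polarity}, applied to $\Pi$, yields two alternatives for $x$: either $x\in\Pi^\perp$, in which case $\Pi\subset x^\perp$ and so $x^\perp\cap\Pi=\Pi$; or $x\in\pi^\perp\setminus\Pi^\perp$ for a unique $t$-space $\pi\subset\Pi$, in which case $\pi\subset x^\perp$ by contravariance of polarity while $\Pi\not\subset x^\perp$, forcing the proper subspace $x^\perp\cap\Pi$ of the $(t+1)$-space $\Pi$ to equal the hyperplane $\pi$. The first alternative is case~(1) of the lemma; splitting the second according to whether $\pi=\alpha$ or $\pi\ne\alpha$ produces cases~(3) and~(2), respectively.

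With $x^\perp\cap\Pi$ identified, the external/tangent claims read off immediately. In~(1), $S_t\subset\Pi\subset x^\perp$, so every line $xy$ is tangent. In~(3), $x^\perp\cap\Pi=\alpha$ is disjoint from $S_t=\Pi\setminus\alpha$, so every line $xy$ is external. In~(2), the line $xy$ is external iff $y\in S_t\setminus\pi$; since $\pi$ and $\alpha$ are distinct hyperplanes of the $(t+1)$-space $\Pi$, their intersection is a $(t-1)$-space of $2^t-1$ points, so $|S_t\cap\pi|=|\pi\setminus(\pi\cap\alpha)|=(2^{t+1}-1)-(2^t-1)=2^t$ and therefore $|S_t\setminus\pi|=2^{t+1}-2^t=2^t$, as asserted. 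The only step requiring a little care is the identification $\pi=x^\perp\cap\Pi$ in the second alternative of Lemma~\ref{lemma polarity}, but this is forced by contravariance of polarity together with the uniqueness clause of that lemma; everything else is bookkeeping.
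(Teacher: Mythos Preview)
Your proof is correct and follows essentially the same approach as the paper's: both apply Lemma~\ref{lemma polarity} with $\Sigma=\Pi$ to obtain the trichotomy, and both use the characterisation of tangent lines via the symplectic polarity from \cite[Lemma~22.3.3]{HirT3}. Your argument is organised slightly more efficiently---by directly identifying $x^\perp\cap\Pi$ as $\Pi$, $\pi$, or $\alpha$ in the three cases, you avoid the case-by-case contradiction arguments the paper uses in (2) and (3)---but the underlying ideas are identical.
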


\begin{proof}
By \cite[Theorem 3.1]{Hir1}, there are exactly $N=2^{t+2}-1$ $t$-spaces in $\Pi$. Let $\pi_0,\pi_1, \cdots , \pi_{N-1}$ be the $t$-spaces contained in $\Pi$. Without loss of generality, assume $\pi_0=\alpha$.

Let $x$ be a point not in $\Q$. By Lemma \ref{lemma polarity}, $x$ is either in $\Pi^\perp$ or in $\pi_i^\perp \setminus \Pi^\perp $ for exactly one $i \in \{0,1,\cdots,N-1\}$.

\begin{enumerate}
 \item Suppose $x\in \Pi^\perp$. Then $x\in y^\perp$ for all $y \in \Pi$. Thus the line through $x$ and a point $y \in \Pi \setminus \Q$ is a tangent to $\Q$. In order words, no point $y$ in $\Pi \setminus \Q=\Pi \setminus \alpha=S_t$ satisfies the condition that the line $xy$ is an external line of $\Q$.
\item Suppose $x\in \pi_i^\perp \setminus \Pi^\perp $ for exactly one $i\neq 0$. By a similar argument, it follows that no point $y$ in $\pi_i \setminus \Q=\pi_i \setminus \alpha$ satisfies the condition that the line $xy$ is an external line of $\Q$. Suppose there exists $z\in S_t \setminus \pi_i$ such that the line through $xz$ is not an external line of $\Q$. Since every line contains exactly three points, that line is tangent to $\Q$ and thus $x$ is in $z^\perp$. Then $x \in z^\perp \cap \pi_i^\perp = \left<z, \pi_i\right>^\perp = \Pi^\perp$. This gives a contradiction, and thus the line through $x$ and a point $y \in S_t$ is an external line of $\Q$ if and only if $y \notin \pi_i$. Since $\alpha\cap \pi_i$ is a ($t-1$)-space, there are exactly
$$|S_t \setminus \pi_i |
=|(\Pi \setminus \pi_i)\setminus (\alpha \setminus \pi_i)|
=[(2^{t+2}-1)-(2^{t+1}-1)]-[(2^{t+1}-1)-(2^t-1)]
=2^t$$
points $y$ in $S_t$ such that the line $xy$ is an external line of $\Q$.

\item Suppose $x\in \alpha^\perp \setminus \Pi^\perp$. Suppose there exists $y\in S_t$ such that the line $xy$ is not an external line of $\Q$. Then that line is a tangent to $\Q$ and thus $x \in y^\perp$. Then $x \in y^\perp \cap \alpha^\perp = \left<y, \alpha\right>^\perp = \Pi^\perp$. This gives a contradiction and the result follows.
 \end{enumerate}
\end{proof}

We are ready to give a proof of Theorem \ref{thm A}.

\begin{proof}[{\bf Proof of Theorem \ref{thm A}}]
By Lemma \ref{lemma GMset}, $S_t$ is a Godsil-McKay switching set for $\Gamma_{\Q}$.

By Godsil and McKay \cite{GM}, $\Gamma_{\Q,t}$ has a same adjacency spectrum as $\Gamma_{\Q}$. Since $\Gamma_{\Q}$ is a strongly regular graph, $\Gamma_{\Q,t}$ is also a strongly regular graph with the same parameters (see the first three paragraphs on \cite[Subsection 14.5.1]{BH}), where the parameters are listed as in Table \ref{table para} on \pageref{table para}.

By the definition of $T_t$ and Lemma \ref{lemma partition}, $$T_t =(\PG(n,2)\setminus \Q)\cap [\big( \bigcup_{\pi \neq \alpha}\pi^\perp \setminus \Pi^\perp \big) \setminus S_t]$$ where $\pi$ runs over all $t$-space of $\Pi$ except $\alpha$. By Lemma \ref{lemma polarity}, $\big( \bigcup_{\pi \neq \alpha}\pi^\perp \setminus \Pi^\perp \big)=\PG(n,2)\setminus \alpha^\perp$. Since $S_t$ is in $\alpha^\perp$, the result follows.
\end{proof}

With Lemma \ref{lemma partition}, we prove Theorem \ref{thm B}.

\begin{proof}[{\bf Proof of Theorem \ref{thm B}}]
By Lemma \ref{lemma subgraph2}, the subgraph of $\Gamma_{\Q}$ with vertex set $S_{t,t}$ is a regular subgraph of degree $2^{t+1}$.

Let $x$ be a point in $(\PG(n,2) \setminus \Q )\setminus S_{t,t}$. By Lemma \ref{lemma polarity}, one of the following cases occurs.\\
(1) $x\in \Pi^\perp$ and $x\in \Pi'^\perp$.\\
(2) $x\in \Pi^\perp$ and $x\in \alpha^\perp \setminus \Pi'^\perp$.\\
(3) $x\in \Pi^\perp$ and $x\in \pi' \setminus \Pi'^\perp$ for some $t$-space $\pi'\neq \alpha$ of $\Pi'$.\\
(4) $x\in \alpha^\perp \setminus \Pi^\perp$ and $x\in \Pi'^\perp$.\\
(5) $x\in \alpha^\perp \setminus \Pi^\perp$ and $x\in \alpha^\perp \setminus \Pi'^\perp$.\\
(6) $x\in \pi^\perp \setminus \Pi^\perp$ for some $t$-space $\pi\neq \alpha$ of $\Pi$,
and $x\in \Pi'^\perp$.\\
(7) $x\in \pi^\perp \setminus \Pi^\perp$ for some $t$-space $\pi\neq \alpha$ of $\Pi$, and $x\in \pi'^\perp \setminus \Pi'^\perp$ for some $t$-space $\pi'\neq \alpha$ of $\Pi'$.



Note that case (3) never occurs. Indeed, since $\alpha$ is a subset of $\Pi$, we have $\Pi^\perp \subset \alpha^\perp$.
Indeed, if $x$ is in $\Pi^\perp$, then $x$ is in $\alpha^\perp$ by Lemma \ref{lemma SinPiperp}. By Lemma \ref{lemma polarity}, $\alpha^\perp = (\alpha^\perp \setminus \Pi'^\perp)\cup \Pi'^\perp$ is disjoint from $\pi'^\perp \setminus \Pi'^\perp$. Similarly, case (6) never occurs.

For the remaining cases, by Lemma \ref{lemma partition}, there are respectively
$0+0=0$,
$0+2^{t+1} =2^{t+1}$,
$2^{t+1}+0 =2^{t+1}$,
$2^{t+1}+2^{t+1}=2^{t+2}$,
$2^{t}+2^{t} =2^{t+1}$
points $y$ in $(\Pi \cup \Pi') \setminus \alpha$ such that the line $xy$ is an external line of $\Q$. Therefore, $S_{t,t}$ is a Godsil-McKay switching set of $\Gamma_{\Q}$ because we have $|S_{t,t}|=2^{t+2}$ by Lemma \ref{lemma size}.

Similarly, by Godsil and McKay \cite{GM}, $\Gamma_{\Q,t,t}$ has a same adjacency spectrum as $\Gamma_{\Q}$. By \cite{BH}, $\Gamma_{\Q,t,t}$ is also a strongly regular graph with the same parameters, where the parameters are listed as in Table \ref{table para}.

The vertex $x$ is adjacent to none or all vertices in $S_{t,t}$, if and only if case (1) or (5) holds, if and only if $x \in \alpha^\perp \setminus (\Pi^\perp \triangle \Pi'^\perp)$. The result for $T_{t,t}$ now follows.
\end{proof}

\section{Some codewords of the switched graphs}\label{sc codes}
We shall use the same notation $n$, $\Q$, $\perp$, $t$, $\alpha$, $\Pi$, $\Pi'$, $\Gamma_{\Q}$, $\Gamma_{\Q,t}$, $\Gamma_{\Q,t,t}$, $S_t$, $S_{t,t}$, $T_t$, $T_{t,t}$, as described in Theorems \ref{thm A} or \ref{thm B}. Recall from Section \ref{sc terminology} that $C(\Gamma_{\Q,t})$ and $C(\Gamma_{\Q,t,t})$ are respectively the code of $C(\Gamma_{\Q,t})$ and $C(\Gamma_{\Q,t,t})$. In this section, we aim to prove $v^{S_t}, v^{T_t} \in C(\Gamma_{\Q,t})$ and $v^{S_{t,t}}, v^{T_{t,t}} \in C(\Gamma_{\Q,t,t})$.

Since we will need frequently the number of external lines of a non-singular quadric through a point, we give these numbers in the following lemma for ease of reference.

\begin{lemma}\label{lemma linethox}
Let $Q_m$ be a non-singular quadric in $\PG(m,2)$. Let $x$ be a point not in $Q_m$.
If $m$ is odd, there are $2^{m-2} \pm 2^{\frac{m-3}{2}}$ external lines through $x$, where the upper sign of $\pm$ is taken when if $Q_m$ is elliptic, and otherwise if $Q_m$ is hyperbolic. If $m$ is even, there are $0$ or $2^{m-2}-1$ external lines through $x$, depending on whether $x$ is the nucleus of $Q_m$ or not.
\end{lemma}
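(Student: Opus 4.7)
My plan is to classify the lines through $x$ by their intersection with $Q_m$. Since $x\notin Q_m$ and every line of $\PG(m,2)$ has three points, each line through $x$ meets $Q_m$ in $0$, $1$, or $2$ points, that is, it is external, tangent, or secant. Writing $e,\tau,s$ for the respective counts, a double count of points distinct from $x$ (split by whether a point lies on $Q_m$) gives the three relations
\[
e+\tau+s \;=\; 2^m-1,\qquad 2s+\tau \;=\; |Q_m|,\qquad 2e+\tau \;=\; 2^{m+1}-2-|Q_m|,
\]
with $|Q_m|$ read off from Table~\ref{table quadric}. So it suffices to pin down $\tau$.

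For $m$ odd I would extract $e$ directly from the valency of $\Gamma_{Q_m}$ in Table~\ref{table para}. By construction of $\Gamma_{Q_m}$, $x$ is adjacent to $y$ precisely when $xy$ is external, so each external line through $x$ contributes two neighbours while tangent and secant lines contribute none. Hence $2e$ equals the valency of $x$, namely $2^{m-1}\pm 2^{(m-1)/2}$, and dividing by $2$ yields $e = 2^{m-2}\pm 2^{(m-3)/2}$ with the claimed elliptic/hyperbolic sign.

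For $m$ even, $Q_m$ is parabolic with a distinguished nucleus $N$. If $x=N$, the defining property of the nucleus is that every line through it is tangent to $Q_m$, so $e=0$. If $x\neq N$, I would apply the polarity $\perp$ induced by $Q_m$: since $x$ lies outside the radical, $x^\perp$ is a hyperplane, and $x\in x^\perp$ because the polar bilinear form is alternating in characteristic~$2$. A line through $x$ is tangent to $Q_m$ exactly when its tangent point $y\in Q_m$ satisfies $y\in x^\perp$, equivalently when the whole line lies in $x^\perp$. Counting the lines through $x$ inside the hyperplane $x^\perp$ yields $\tau=(|x^\perp|-1)/2=2^{m-1}-1$, and substituting into the point-count identity above gives $e$. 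The genuine obstacle is this identification of the tangents through $x$ with the lines through $x$ lying in $x^\perp$; everything else is elementary bookkeeping, and for $m$ odd the work is outsourced to the already-known parameters recorded in Table~\ref{table para}.
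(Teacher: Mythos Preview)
Your argument is correct but follows a different route from the paper's. The paper treats both parities uniformly: it invokes the transitivity of $\Aut(Q_m)$ on the relevant external points (all of them when $m$ is odd; all except the nucleus when $m$ is even), then divides the known total number of external lines by the number of such points, using that each external line carries three of them. No polar form and no tangent count enter. Your odd-$m$ shortcut via Table~\ref{table para} is legitimate---by the definition of $\Gamma_{Q_m}$ the valency $k$ listed there is exactly $2e$---but it outsources the work to the cited reference rather than proving anything new. For even $m$ your polar-form approach is a genuine alternative; two small points deserve care. First, the bilinear form attached to a parabolic quadric in characteristic~$2$ is degenerate (its radical is the span of the nucleus), so $\perp$ is not literally a polarity of $\PG(m,2)$, though $x^\perp$ is still a hyperplane whenever $x$ lies off the radical. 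Second, to conclude $\tau=2^{m-1}-1$ you must also rule out external lines through $x$ lying in $x^\perp$; this holds because for any $a\in x^\perp\setminus\{x\}$ one has $Q(x+a)=Q(x)+Q(a)+B(x,a)=1+Q(a)$, so exactly one of $a$, $x+a$ lies on $Q_m$ and the line $\{x,a,x+a\}$ is tangent. Your count then gives $e=2^{m-2}$, which is also what the paper's transitivity argument actually produces; the ``$-1$'' in the stated value $2^{m-2}-1$ appears to be a misprint.
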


\begin{proof}
When $m$ is odd, $Q_m$ has $2^m\mp 2^{\frac{m-1}{2}}-1$ points (see Table \ref{table para}). Thus, there are
$|\PG(m,2)|-|Q_m| =2^m\pm 2^{(m-1)/2}$ non-quadric points. By \cite[Theorem 22.6.6]{HirT3}, these non-quadric points are in the same orbit under the subgroup $\Aut(Q_m)$ of the automorphism group of $\PG(m,2)$ which fixes $Q_m$. Thus, through each point, there are a same number of external lines. The result follows because there are $\frac{1}{3}(2^{m-2}) (2^{\frac{m+1}{2}}\pm 1) (2^{\frac{m-1}{2}}\pm 1)$ external lines in $\PG(m,2)$ \cite[Lemma 22.8.1]{HirT3}.

Similarly, when $m$ is even, there are $2^m$ non-quadric points. Recall from Section \ref{sc terminology} that all line through the nucleus of $Q_n$ is tangent to $Q_n$. By \cite[Theorem 22.6.6]{HirT3}, any non-quadric points, other than the nucleus, are in the same orbit under $\Aut(Q_m)$. The result follows similarly because there are
$\frac{1}{3} (2^{m-2}) (2^m-1)$ external lines in $\PG(m,2)$ \cite[Lemma 22.8.1]{HirT3}.
\end{proof}

In the following lemma, whenever we use the signs $\pm$ or $\mp$, the upper sign is always taken when $\Q$ is elliptic, and lower sign is always taken when $\Q$ is hyperbolic.

\begin{lemma}\label{lemma extline1}
There is an external line $l$ of $\Q$ such that $l$ and $\alpha^\perp$ are disjoint.
\end{lemma}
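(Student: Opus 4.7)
The plan is a double-counting argument on external lines. Since $\alpha^\perp$ is a subspace and any line of $\PG(n,2)$ has exactly three points, every external line of $\Q$ meets $\alpha^\perp$ in $0$, $1$, or $3$ points; let $E_j$ count the external lines meeting $\alpha^\perp$ in exactly $j$ points. The goal is to show $E_0\geq 1$.

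First I compute $|\alpha^\perp\setminus\Q|$. Since the totally singular $t$-space $\alpha$ satisfies $\alpha\subset\alpha^\perp$, the intersection $\alpha^\perp\cap\Q$ is a cone with vertex $\alpha$ over a non-singular quadric $Q'$ sitting in a complement of $\alpha$ in $\alpha^\perp$ of projective dimension $n-2t-2$; a routine Witt-index calculation shows $Q'$ has the same type as $\Q$. Combining \eqref{eqn cone} with the point counts from Table \ref{table quadric} then yields
\begin{equation*}
|\alpha^\perp\cap\Q|=2^{n-t-1}\mp 2^{(n-1)/2}-1,\qquad |\alpha^\perp\setminus\Q|=2^{n-t-1}\pm 2^{(n-1)/2},
\end{equation*}
with the upper sign corresponding to $\Q$ elliptic.

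Next, by Lemma \ref{lemma linethox} every point of $\PG(n,2)\setminus\Q$ lies on exactly $N:=2^{n-2}\pm 2^{(n-3)/2}$ external lines. Summing $N$ first over the points of $\alpha^\perp\setminus\Q$ and then over all of $\PG(n,2)\setminus\Q$ gives $E_1+3E_3 = N\,|\alpha^\perp\setminus\Q|$ and $E_0+E_1+E_3 = T:=\frac{1}{3}N\,|\PG(n,2)\setminus\Q|$, hence
\begin{equation*}
E_0 \;=\; T - N\,|\alpha^\perp\setminus\Q| + 2E_3.
\end{equation*}
An elementary check from the explicit formulas above shows $T\geq N\,|\alpha^\perp\setminus\Q|$, with equality occurring \emph{only} in the boundary case $n=5$, $t=1$, $\Q$ elliptic. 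In every other case $E_0\geq T-N\,|\alpha^\perp\setminus\Q|>0$ and we are done.

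Finally, in the boundary case the first-step formulas give $|\alpha^\perp\cap\Q|=3=|\alpha|$, so $\alpha^\perp\cap\Q=\alpha$; any line in the $3$-space $\alpha^\perp$ disjoint from the line $\alpha$ therefore misses $\Q$ entirely, producing an external line contained in $\alpha^\perp$ and hence $E_3\geq 1$ and $E_0=2E_3\geq 2$. Such a disjoint line exists by a trivial dimension count in $\PG(3,2)$. The principal obstacle is the tightness of the global count in this one boundary case; it is dissolved by the observation that $\alpha^\perp\cap\Q$ collapses to $\alpha$ when $t$ is as large as allowed for an elliptic quadric, supplying an external line inside $\alpha^\perp$ essentially for free.
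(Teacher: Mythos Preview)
Your argument is correct and takes a genuinely different route from the paper. The paper's proof is local: it fixes a non-quadric point $x\notin\alpha^\perp$, forms the $(n-t)$-space $\Sigma=\langle x,\alpha^\perp\rangle$, and then compares the number of external lines of $\Q$ through $x$ (from Lemma~\ref{lemma linethox}) with the number of external lines through $x$ that lie in $\Sigma$. This forces a case split $t=1$, $t=2$, $t>2$, because for small $t$ one must actually identify $\Sigma\cap\Q$ (a parabolic quadric, respectively a line cone) to bound the in-$\Sigma$ count, while for $t>2$ the crude bound ``all lines in $\Sigma$ through $x$'' already suffices.

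Your global double count avoids this case analysis entirely: one inequality $T\ge N\,|\alpha^\perp\setminus\Q|$ handles every $(n,t)$ at once, and your identification of $|\alpha^\perp\setminus\Q|$ is exactly Lemma~\ref{lemma size hat}\eqref{item alpha}. The price you pay is the single boundary case $(n,t)=(5,1)$ with $\Q$ elliptic, where the inequality degenerates to equality; but there your observation that $\alpha^\perp\cap\Q$ collapses to $\alpha$ (so any of the many lines of the $3$-space $\alpha^\perp$ skew to the line $\alpha$ is external) disposes of it cleanly. Each approach has its virtue: the paper's is more geometric and reuses the structure of $\Sigma\cap\Q$, while yours is shorter, more uniform, and arguably easier to verify.
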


\begin{proof}
Let $x$ be a non-quadric point not in $\alpha^\perp$. Let $\Sigma$ be the $(n-t)$-space spanned by $\{x, \alpha^\perp\}$. If there is an external line of $\Q$ through $x$ but not in $\Sigma$, then such a line will be disjoint from $\alpha^\perp$ and we are done.

We first consider the case for $t=1$. By Lemma \ref{lemma polarity}, $x \in \pi^\perp$ for a unique $1$-space $\pi$ of $\Pi$. Since $x$ is not in $\alpha^\perp$, we have $\pi \neq \alpha$ and so $\pi \cap \alpha$ is a point of $\Q$. By Theorem \cite[Theorem 22.7.2]{HirT3}, $\Sigma \cap \Q$ is a parabolic quadric. If $x$ is the nucleus of $\Sigma \cap \Q$, then there is no external line (of both $\Q$ and $\Sigma \cap \Q$) in $\Sigma$ and through $x$, as desired. If $x$ is not the nucleus of $\Sigma \cap \Q$, then there are $$2^{n-3}-1$$ external lines in $\Sigma$ and through $x$ by Lemma \ref{lemma linethox}. Since $n$ is not less than $5$, this number is less than the number of external lines in $\PG(n,2)$ through $x$ found in Lemma \ref{lemma linethox}, and thus there is an external line of $\Q$ through $x$ but not in $\Sigma$, as desired.

Similarly, in case $t=2$, $\Sigma$ is an $(n-2)$-space meeting $\Q$ in a line cone $\Pi_1\Q^-_{n-4}$ over an elliptic quadric $\Q^-_{n-4}$ if $\Q$ is elliptic, and a line cone $\Pi_1\Q^+_{n-4}$ over a hyperbolic quadric $\Q^+_{n-4}$ if $\Q$ is hyperbolic.
Since $\Pi_1\Q^-_{n-4}$ has
\begin{equation}\label{eqn coneE}
|\Pi_1\Q^-_{n-4}|=3+4(2^{n-4}-2^{\frac{n-5}{2}}-1)=2^{n-2}- 2^{(n-1)/2}-1
\end{equation}
points and $\Pi_1\Q^+_{n-4}$ has
\begin{equation}\label{eqn coneH}
|\Pi_1\Q^+_{n-4}|=3+4(2^{n-4}+2^{\frac{n-5}{2}}-1)=2^{n-2}+2^{(n-1)/2}-1
\end{equation}
points, there are
$$|\Sigma|-|\Pi_1\Q^\epsilon_{n-4}|
=2^{n-2}\pm 2^{\frac{n-1}{2}} , \epsilon\in\{-,+\}$$
non-quadric points in the $(n-2)$-space $\Sigma$. Thus, there are at most $$\frac{2^{n-2}\pm 2^{\frac{n-1}{2}}}{2}=2^{n-3}\pm 2^{\frac{n-3}{2}}$$ external lines in $\Sigma$ through $x$. Since this number is less than the number of external lines in $\PG(n,2)$ through $x$ found in Lemma \ref{lemma linethox}, there is an external line of $\Q$ through $x$ but not in $\Sigma$, as desired.

We now consider the case for $t>2$. By \cite[Theorem 3.1]{Hir1}, through $x$, there are $$2^{n-t}-1$$ lines in the $(n-t)$-space $\Sigma$. Since this number is less than the number of external lines through $x$ found in Lemma \ref{lemma linethox}, there is an external line of $\Q$ through $x$ but not in $\Sigma$, as desired.
\end{proof}

\begin{lemma}\label{lemma char S}
The vector $v^{S_t}$ is in $C(\Gamma_{\Q,t})$. The vector $v^{S_{t,t}}$ is in $C(\Gamma_{\Q,t,t})$.
\end{lemma}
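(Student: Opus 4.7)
The plan is to realize $v^{S_t}$ explicitly as a sum of three rows of the adjacency matrix of $\Gamma_{\Q,t}$, using the external line supplied by Lemma \ref{lemma extline1}; the same construction, verbatim, will handle $v^{S_{t,t}}$.

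I would first record how the rows of the adjacency matrices transform under the switching. For any non-quadric point $x$ the adjacency rule of $\Gamma_{\Q}$, together with the discussion preceding Tables \ref{table weightE} and \ref{table weightH}, identifies the $x$-th row of the adjacency matrix of $\Gamma_{\Q}$ with $v^{(\PG(n,2)\setminus\Q)\setminus x^\perp}$; indeed $y\neq x$ is adjacent to $x$ in $\Gamma_{\Q}$ precisely when $y\notin \Q$ and the line $xy$ is external, which for non-quadric $x,y$ is equivalent to $y\notin x^\perp$. The Godsil--McKay switching flips the edges between $x$ and $S_t$ exactly when $x\in T_t$, so for $x\in T_t$ the $x$-th row of $\Gamma_{\Q,t}$ equals $v^{(\PG(n,2)\setminus\Q)\setminus x^\perp}+v^{S_t}$ mod $2$. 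The analogous formula with $S_{t,t}$ holds for $x\in T_{t,t}$ in $\Gamma_{\Q,t,t}$.

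I would then apply Lemma \ref{lemma extline1} to choose an external line $l=\{a,b,c\}$ of $\Q$ disjoint from $\alpha^\perp$. All three points of $l$ are non-quadric points outside $\alpha^\perp$, so by \eqref{eqn Ts1} they lie in $T_t$, and since $T_t\subseteq T_{t,t}$ by \eqref{eqn Tss1} they also lie in $T_{t,t}$. Summing the three corresponding rows of the adjacency matrix of $\Gamma_{\Q,t}$ yields
\[
\sum_{x\in\{a,b,c\}} v^{(\PG(n,2)\setminus\Q)\setminus x^\perp}\;+\;3\,v^{S_t},
\]
and the coefficient $3\equiv 1\pmod{2}$ keeps the $v^{S_t}$ term alive.

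The main step is then to check that the first sum vanishes. I expect this to follow from a short $\mathbb F_2$-linearity argument: since $a,b,c$ are the three points of a projective line we have $a+b+c=0$ in $\mathbb F_2^{n+1}$, hence for every $y\in\PG(n,2)\setminus\Q$ one has $y\cdot a+y\cdot b+y\cdot c=0$, so $y$ lies in an odd number of $a^\perp,b^\perp,c^\perp$ and therefore outside $x^\perp$ for an even number of $x\in\{a,b,c\}$. This forces the coordinate of $y$ in the first sum to be zero, so the three rows add up to $v^{S_t}$, proving $v^{S_t}\in C(\Gamma_{\Q,t})$. Repeating the same three-row sum inside $C(\Gamma_{\Q,t,t})$, with $S_t$ replaced by $S_{t,t}$ throughout, yields $v^{S_{t,t}}$, completing the proof.
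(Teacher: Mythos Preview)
Your proof is correct and follows essentially the same approach as the paper: both pick the external line of Lemma~\ref{lemma extline1}, observe its three points lie in $T_t$ (hence in $T_{t,t}$), and sum the three corresponding rows of the switched adjacency matrix to isolate $v^{S_t}$ (resp.\ $v^{S_{t,t}}$). The only cosmetic difference is that you verify $r_a+r_b+r_c=0$ via bilinearity of the symplectic form (your ``$y\cdot a$'' should really be written $B(y,a)$ for the polar form of $\Q$), whereas the paper invokes the partition of Lemma~\ref{lemma polarity}.
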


\begin{proof}
Let $l=\{x_1,x_2,x_3\}$ be an external line of $\Q$ such that $l$ and $\alpha^\perp$ are disjoint. This exists by Lemma \ref{lemma extline1}.

For each $i=1,2,3$, let $r_i$, $\dot{r_i}$ and $\ddot{r_i}$ respectively be the row of the adjacency matrices of $\Gamma_{\Q}$, $\Gamma_{\Q,t}$, $\Gamma_{\Q,t,t}$ corresponding to $x_i$. Then $r_i$ is the characteristic vector of $(\PG(n,2)\setminus \Q ) \setminus x_i^\perp $. By Lemma \ref{lemma polarity}, $\PG(n,2)\setminus \Q$ is the disjoint union of $l^\perp \setminus \Q$, $(x_1^\perp \setminus l^\perp) \setminus \Q $, $(x_2^\perp \setminus l^\perp) \setminus \Q $ and $(x_3^\perp \setminus l^\perp) \setminus \Q $. Since $l^\perp$ is a subset of $x_i^\perp$ for $i=1,2,3$, we have
\begin{equation}\label{eqn ri1}
r_1+r_2+r_3=0
\end{equation}
in $\mathbb F_2^{|V_\Q|}$.

Since $l$ is disjoint from $\alpha^\perp$, we have $l\subset T_t $ and $l\subset T_{t,t}$. By the definitions of $\Gamma_{\Q,t}$ and $\Gamma_{\Q,t,t}$, for each $i=1,2,3$, we have
\begin{equation}\label{eqn dri1}
\dot{r_i}=r_i+v^{S_t}
\end{equation}
and
\begin{equation}\label{eqn ddri1}
\ddot{r_i}=r_i+v^{S_{t,t}}.
\end{equation}
By \eqref{eqn ri1} and \eqref{eqn dri1}, $\dot{r_1}+\dot{r_2}+\dot{r_3}=v^{S_t}$ and so $v^{S_t}$ is a codeword of $C(\Gamma_{\Q,t})$. Similarly, $v^{S_{t,t}}$ is a codeword of $C(\Gamma_{\Q,t,t})$ because $\ddot{r_1}+\ddot{r_2}+\ddot{r_3}=v^{S_{t,t}}$ by \eqref{eqn ri1} and \eqref{eqn ddri1}.
\end{proof}

The purpose and proof of following lemma are similar to those of Lemma \ref{lemma extline1}, and we apply this lemma to prove $v^{T_t} \in C(\Gamma_{\Q,t})$ and $v^{T_{t,t}} \in C(\Gamma_{\Q,t,t})$.

\begin{lemma}\label{lemma extline2}
Let $x$ be a non-quadric point in $\alpha^\perp$. Then there is an external line $l$ of $\Q$ through $x$ such that $l$ is tangent to $\alpha^\perp$ at $x$.
\end{lemma}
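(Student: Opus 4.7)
The plan is to mirror the pigeonhole approach already used in Lemma \ref{lemma extline1}. Since $\alpha^\perp$ is a subspace of $\PG(n,2)$, any line through $x$ either lies entirely in $\alpha^\perp$ or meets $\alpha^\perp$ in the single point $x$; in the latter case the line is (in the paper's sense) tangent to $\alpha^\perp$ at $x$. It therefore suffices to produce an external line of $\Q$ through $x$ which is \emph{not} contained in $\alpha^\perp$, and I would do this by counting: show that the total number of external lines of $\Q$ through $x$ exceeds the number of those that happen to lie inside $\alpha^\perp$.

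First, Lemma \ref{lemma linethox} supplies the total count $2^{n-2}\pm 2^{(n-3)/2}$ of external lines through $x$ in $\PG(n,2)$ (with the paper's convention that the upper sign corresponds to $\Q$ elliptic). To bound from above the external lines through $x$ that lie in $\alpha^\perp$, I would observe that each such line carries, besides $x$, exactly two further points of $\alpha^\perp\setminus\Q$, and distinct external lines through $x$ use disjoint pairs of such points; this gives the upper bound $(|\alpha^\perp\setminus\Q|-1)/2$.

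The remaining ingredient is $|\alpha^\perp\setminus\Q|$. Because $\alpha$ is a totally singular $t$-space of $\Q$ and lies inside its own polar $\alpha^\perp$, the intersection $\alpha^\perp\cap\Q$ is a cone with vertex $\alpha$ over a non-singular quadric of the \emph{same type} as $\Q$ in a complementary $(n-2t-2)$-space---the structural decomposition used implicitly in \eqref{eqn coneE}--\eqref{eqn coneH}, and which can be quoted from \cite[Theorem 22.8.3]{HirT3}. Feeding the point counts from Table \ref{table quadric} into the cone formula \eqref{eqn cone} gives $|\alpha^\perp\cap\Q|=2^{n-t-1}\mp 2^{(n-1)/2}-1$, and hence $|\alpha^\perp\setminus\Q|=2^{n-t-1}\pm 2^{(n-1)/2}$, using that $|\alpha^\perp|=2^{n-t}-1$.

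Putting the pieces together, the inequality $|E(x)|>(|\alpha^\perp\setminus\Q|-1)/2$ simplifies, uniformly across both sign choices (the $\pm 2^{(n-1)/2}$ terms cancel after multiplying by $2$), to $2^{n-1}>2^{n-t-1}-1$, which is clear for any $t\ge 1$. The main conceptual obstacle is pinning down the cone decomposition of $\alpha^\perp\cap\Q$ carefully in characteristic $2$ and matching the type of the quotient quadric to that of $\Q$; rather than redoing this, I would appeal directly to the classification of quadric sections in \cite[Ch.~22]{HirT3}, keeping the argument structurally parallel to the proof of Lemma \ref{lemma extline1}.
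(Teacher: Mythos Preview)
Your argument is correct and in fact tidier than the paper's. Both proofs reduce to producing an external line through $x$ not lying in $\alpha^\perp$, and both compare the total count $2^{n-2}\pm 2^{(n-3)/2}$ from Lemma~\ref{lemma linethox} with an upper bound on external lines through $x$ contained in $\alpha^\perp$. The difference is in how that upper bound is obtained. The paper splits into cases: for $t>1$ it uses the crude bound $2^{n-1-t}-1$ (all lines of $\alpha^\perp$ through $x$), while for $t=1$ this crude bound fails in the hyperbolic case, so the paper instead computes the \emph{exact} number of external lines of $\Q$ in $\alpha^\perp$ through $x$ by analysing tangents and secants of the cone $\Pi_1\Q_{n-4}^\pm$. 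Your bound $(|\alpha^\perp\setminus\Q|-1)/2$ sits strictly between these two: sharper than ``all lines'' because you only count non-quadric point pairs, yet not requiring the detailed tangent/secant analysis. The payoff is that your inequality $2^{n-1}>2^{n-t-1}-1$ goes through uniformly for every $t\ge 1$, eliminating the case split. The quantity $|\alpha^\perp\setminus\Q|=2^{n-t-1}\pm 2^{(n-1)/2}$ you compute is exactly what the paper later records as Lemma~\ref{lemma size hat}\eqref{item alpha}, so your appeal to \cite[Theorem~22.8.3]{HirT3} and the cone formula \eqref{eqn cone} is on firm ground.
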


\begin{proof}
To prove the lemma, it suffices to show some of external line through $x$ does not lie in $\alpha^\perp$.

We first consider the case for $t=1$. Then $\alpha^\perp$ is an $(n-2)$-space. By \cite[Theorem 22.7.2]{HirT3}, $\alpha^\perp \cap \Q$ is a line cone $\Pi_1\Q_{n-4}^-$ over an elliptic quadric $\Q^-_{n-4}$ if $\Q$ is elliptic, and a line cone $\Pi_1\Q_{n-4}^+$ over a hyperbolic quadric $\Q^+_{n-4}$ if $\Q$ is hyperbolic. For either $\Q$ elliptic or hyperbolic, the set of points $y$'s in $\alpha^\perp \cap \Q$ such that the line $xy$ is tangent to $\alpha^\perp \cap \Q$ forms a line cone $\Pi_1\Q_{n-5}$ over a parabolic quadric $\Q_{n-5}$. Since $\Q_{n-5}$ has $2^{n-5}-1$ points \cite[Theorem 5.21]{Hir1}, there are
$$|\Pi_1\Q_{n-5}|=[3+4(2^{n-5}-1)]=2^{n-3}-1$$
tangents in $\alpha^\perp$ through $x$. Using \eqref{eqn coneE} and \eqref{eqn coneH}, there are
$$\frac{|\alpha^\perp \cap \Q|-|\Pi_1\Q_{n-5}|}{2}
=2^{n-4}\mp 2^{(n-3)/2}$$
secants in $\alpha^\perp$ through $x$. Since there are $2^{n-2}-1$ lines in $\alpha^\perp$ through $x$ \cite[Theorem 3.1]{Hir1}, there are
\begin{equation}\label{eqn externalline}
(2^{n-2}-1)-(2^{n-4}\mp 2^{(n-3)/2})-(2^{n-3}-1)
=2^{n-4}\pm 2^{(n-3)/2}
\end{equation}
external lines of $\Q$ in $\alpha^\perp$ through $x$, where the upper signs of $\pm$ and $\mp$ are taken if $\Q$ is elliptic and the lower sign if $\Q$ is hyperbolic. Since the number in \eqref{eqn externalline} is less than the number of external lines through $x$ found in Lemma \ref{lemma linethox}, there is an external line of $\Q$ through $x$ but not in $\alpha^\perp$, as desired.

We now consider the case for $t>1$. By \cite[Theorem 3.1]{Hir1}, through $x$, there are only $$2^{n-1-t}-1$$ lines of the $(n-1-t)$-space $\alpha^\perp$. Since this number is less than the number of external lines through $x$ found in Lemma \ref{lemma linethox}, there is an external line of $\Q$ through $x$ but not in $\alpha^\perp$, as desired.
\end{proof}

\begin{lemma}\label{lemma char T}
The vector $v^{T_t}$ is in $C(\Gamma_{\Q,t})$. The vector $v^{T_{t,t}}$ is in $C(\Gamma_{\Q,t,t})$.
\end{lemma}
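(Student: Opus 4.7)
The plan is to run the same three--row argument used in Lemma \ref{lemma char S}, but with an external line that straddles the switching set and its partner set, so that the telescoped row sum produces $v^{T_t}$ (respectively $v^{T_{t,t}}$) instead of $v^{S_t}$ (respectively $v^{S_{t,t}}$). Concretely, for $v^{T_t}$, I would pick any point $x\in S_t$; by Lemma \ref{lemma SinPiperp}(1), $x$ lies in $\alpha^\perp$, so Lemma \ref{lemma extline2} furnishes an external line $l=\{x,x_2,x_3\}$ of $\Q$ through $x$ that is tangent to $\alpha^\perp$ at $x$. Tangency at $x$ forces $x_2,x_3\notin\alpha^\perp$, and externality of $l$ forces $x_2,x_3\notin\Q$; hence $x_2,x_3\in T_t=(\PG(n,2)\setminus\Q)\setminus\alpha^\perp$.

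The identity $r_x+r_{x_2}+r_{x_3}=0$ in $\mathbb{F}_2^{|V_\Q|}$ depends only on $l$ being an external line of $\Q$ and so carries over verbatim from the proof of Lemma \ref{lemma char S}. The Godsil--McKay switching recipe now rewrites the three rows as
\[
\dot{r_x}=r_x+v^{T_t}\ \ (\text{since } x\in S_t), \qquad \dot{r_{x_j}}=r_{x_j}+v^{S_t}\ \ (\text{since } x_j\in T_t,\ j=2,3).
\]
Summing and using $r_x+r_{x_2}+r_{x_3}=0$ gives $\dot{r_x}+\dot{r_{x_2}}+\dot{r_{x_3}}=v^{T_t}+2v^{S_t}=v^{T_t}$ in $\mathbb{F}_2^{|V_\Q|}$, which exhibits $v^{T_t}$ as a sum of three rows of the adjacency matrix of $\Gamma_{\Q,t}$ and hence as a codeword of $C(\Gamma_{\Q,t})$.

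The argument for $v^{T_{t,t}}\in C(\Gamma_{\Q,t,t})$ is structurally identical. I would choose $x\in S_{t,t}$, invoke Lemma \ref{lemma SinPiperp}(2) to place $x$ in $\alpha^\perp$, apply Lemma \ref{lemma extline2} to obtain an external line $l=\{x,x_2,x_3\}$ tangent to $\alpha^\perp$ at $x$, and observe $x_2,x_3\in T_t\subseteq T_{t,t}$. The switching rule now gives $\ddot{r_x}=r_x+v^{T_{t,t}}$ (since $x\in S_{t,t}$) and $\ddot{r_{x_j}}=r_{x_j}+v^{S_{t,t}}$ (since $x_j\in T_{t,t}$), and summing produces $\ddot{r_x}+\ddot{r_{x_2}}+\ddot{r_{x_3}}=v^{T_{t,t}}$.

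There is no real obstacle: Lemma \ref{lemma extline2} was designed precisely to supply the needed line, and Lemma \ref{lemma SinPiperp} guarantees that any vertex of the switching set lies in $\alpha^\perp$ and hence qualifies as the basepoint $x$. The one item that must be kept in view is the compact bookkeeping for how rows change under Godsil--McKay switching, namely that flipping the edges between a switching set $S$ and its half--neighbour set $T$ amounts to adding $v^T$ to each row indexed by $S$ and $v^S$ to each row indexed by $T$, while leaving all other rows unchanged; this is the same fact already used in Lemma \ref{lemma char S}.
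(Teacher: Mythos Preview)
Your proposal is correct and follows essentially the same approach as the paper's own proof: pick a basepoint in the switching set (the paper takes $x_1\in S_t\subset S_{t,t}$ for both parts, you take $x\in S_t$ and then $x\in S_{t,t}$, which is an immaterial difference), use Lemma~\ref{lemma extline2} to produce an external line tangent to $\alpha^\perp$ at that point, and sum the three switched rows to obtain $v^{T_t}$ (respectively $v^{T_{t,t}}$). Your write-up is in fact slightly more explicit than the paper's in justifying $x\in\alpha^\perp$ via Lemma~\ref{lemma SinPiperp} and in noting $T_t\subseteq T_{t,t}$.
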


\begin{proof}
Let $x_1\in S_t$. Note that $x_1 \in \alpha^\perp$. Take an external line $l=\{x_1,x_2,x_3\}$ of $\Q$ through $x$ such that $l$ is tangent to $\alpha^\perp$ at $x_1$. It exists by Lemma \ref{lemma extline2}.

For each $i=1,2,3$, let $r_i$, $\dot{r_i}$ and $\ddot{r_i}$ respectively be the row of the adjacency matrices of $\Gamma_{\Q}$, $\Gamma_{\Q,t}$, $\Gamma_{\Q,t,t}$ corresponding to $x_i$. By the same argument used in the proof of Lemma \ref{lemma char S}, we have
\begin{equation}\label{eqn ri2}
r_1+r_2+r_3=0.
\end{equation}
Because of $x_1\in S_t \subset S_{t,t}$, by the definitions of $\Gamma_{\Q,t}$ and $\Gamma_{\Q,t,t}$, we have
\begin{equation}\label{eqn dr1}
\dot{r_1}=r_1+v^{T_t}
\end{equation}
and
\begin{equation}\label{eqn ddr1}
\ddot{r_1}=r_1+v^{T_{t,t}}.
\end{equation}
Since $x_2,x_3$ are not in $\alpha^\perp$, they are in $T_t$ and $T_{t,t}$ by \eqref{eqn Ts1} and \eqref{eqn Tss1}. So, for $i=2,3$, we have
\begin{equation}\label{eqn dri2}
\dot{r_i}=r_i+v^{S_t}
\end{equation}
and
\begin{equation}\label{eqn ddri2}
\ddot{r_i}=r_i+v^{S_{t,t}}.
\end{equation}
By \eqref{eqn ri2}, \eqref{eqn dr1} and \eqref{eqn dri2}, $\dot{r_1}+\dot{r_2}+\dot{r_3}=v^{T_t}$ and so $v^{T_t}$ is a codeword of $C(\Gamma_{\Q,t})$. Similarly, $v^{T_{t,t}}$ is a codeword of $C(\Gamma_{\Q,t,t})$ because $\ddot{r_1}+\ddot{r_2}+\ddot{r_3}=v^{T_{t,t}}$ by \eqref{eqn ri2}, \eqref{eqn ddr1} and \eqref{eqn ddri2}.
\end{proof}

\section{The minimum word of $C(\Gamma_{\Q,t})$ and $C(\Gamma_{\Q,{t,t}})$}\label{sc rank}
\label{sc rank}
In this section, we use the same notation $n$, $\Q$, $\perp$, $t$, $\alpha$, $\Pi$, $\Pi'$, $\Gamma_{\Q}$, $\Gamma_{\Q,t}$, $\Gamma_{\Q,t,t}$, $S_t$, $S_{t,t}$, $T_t$ and $T_{t,t}$ as in Section \ref{sc codes}, except requiring $n \geq 7$.

Let
\begin{equation}
C_t=\left<C(\Gamma_{\Q,t}),v^{S_t},v^{T_t}\right>
\end{equation}
and
\begin{equation}
C_{t,t}=\left<C(\Gamma_{\Q,t,t}),v^{S_{t,t}},v^{T_{t,t}}\right>
\end{equation}
In this section, we aim to prove the minimum word of $C_t$ and $C_{t,t}$ are respectively $v^{S_t}$ and $v^{S_{t,t}}$. This will give the minimum word of $C(\Gamma_{\Q,t})$ and $C(\Gamma_{\Q,t,t})$ once we prove that $C_t = C(\Gamma_{\Q,t})$ and $C_{t,t}=C(\Gamma_{\Q,t,t})$ in the next section.

\begin{lemma}\label{lemma size ws}
Let $w\in C(\Gamma_{\Q})$. Then $\wt(w+v^{S_t})> 2^{t+1}$ and $\wt(w+v^{S_{t,t}})> 2^{t+2}$.
\end{lemma}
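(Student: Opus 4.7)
The claim reduces to a direct weight calculation that exploits the explicit description of codewords of $C(\Gamma_{\Q})$ given in Section~\ref{sc terminology}: every nonzero $w\in C(\Gamma_{\Q})$ has the form $w=v^B$ with
$B=(\PG(n,2)\setminus \Q)\setminus \Sigma$
for some $(n-1)$-space $\Sigma$ of $\PG(n,2)$. (The codeword $w=0$ would yield equality $\wt(v^{S_t})=2^{t+1}$, so the strict statement is understood to exclude it.) The key geometric input is that the two switching sets lie in the non-quadric points: $S_t=\Pi\setminus \alpha\subset \PG(n,2)\setminus \Q$ and $S_{t,t}=(\Pi\cup\Pi')\setminus \alpha\subset \PG(n,2)\setminus \Q$, which follow from $\Pi\cap \Q=\Pi'\cap \Q=\alpha$ in Theorems~\ref{thm A} and \ref{thm B}.

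\smallskip

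Given this inclusion, $\mathrm{supp}(w)\cap S_t=S_t\setminus\Sigma$ and $\mathrm{supp}(w)\cap S_{t,t}=S_{t,t}\setminus\Sigma$, so the standard symmetric-difference identity $|X\triangle Y|=|X|+|Y|-2|X\cap Y|$ yields
\begin{align*}
\wt(w+v^{S_t}) &= \wt(w)+|S_t|-2|S_t\setminus\Sigma| = \wt(w)-|S_t|+2|S_t\cap\Sigma|,\\
\wt(w+v^{S_{t,t}}) &= \wt(w)-|S_{t,t}|+2|S_{t,t}\cap\Sigma|.
\end{align*}
Dropping the non-negative intersection terms, the two inequalities of the lemma reduce to $\wt(w)>2|S_t|=2^{t+2}$ and $\wt(w)>2|S_{t,t}|=2^{t+3}$, respectively, with $|S_t|$ and $|S_{t,t}|$ computed in Lemma~\ref{lemma size}.

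\smallskip

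To finish, I will invoke the weight distribution of $C(\Gamma_{\Q})$ from Tables~\ref{table weightE} and~\ref{table weightH}. The smallest positive weight is $2^{n-1}$ in the elliptic case and $2^{n-1}-2^{(n-1)/2}$ in the hyperbolic case. In the elliptic case the target reduces to $t+3\leq n-1$, which follows from $t\leq (n-3)/2$ and the section-wide hypothesis $n\geq 7$ (this is precisely where the bound $n\geq 7$, rather than $n\geq 5$, is used: the $S_{t,t}$ bound requires $(n-3)/2\leq n-4$, i.e., $n\geq 5$, together with $t+3<n-1$, i.e., $n>5$). In the hyperbolic case both required inequalities collapse, after the stricter range $t\leq (n-5)/2$ of Theorem~\ref{thm B}, to $2^{n-1}-2^{(n-1)/2}>2^{(n+1)/2}$, equivalently $2^{(n-1)/2}>3$, which holds for all odd $n\geq 5$. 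The entire argument is really just bookkeeping: the only potential obstacle is keeping the elliptic/hyperbolic dichotomy and the two admissible ranges for $t$ straight simultaneously; no geometry beyond the hyperplane characterization of $C(\Gamma_{\Q})$ and the inclusions $S_t,S_{t,t}\subset \PG(n,2)\setminus \Q$ is required.
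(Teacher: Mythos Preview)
Your argument is correct and essentially identical to the paper's: both reduce to the triangle inequality $\wt(w+v^S)\geq \wt(w)-|S|$ (you derive it via the symmetric-difference identity and then drop the nonnegative $2|S\cap\Sigma|$ term, the paper invokes it directly) and then verify the resulting numerical bound using the minimum nonzero weight of $C(\Gamma_\Q)$ together with $n\geq 7$ and the admissible ranges of $t$. Your explicit remark that the case $w=0$ must be excluded is a point the paper's proof silently assumes.
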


\begin{proof}
From Table \ref{table weightE}, if $\Q$ is elliptic, the weight $\mbox{wt}(w)$ of $w$ satisfies $\mbox{wt}(w) \geq 2^{n-1}.$ By Lemma \ref{lemma size}, $\mbox{wt}(v^{S_t}) = 2^{t+1}$ and $\mbox{wt}(v^{S_{t,t}})=2^{t+2}$. So,
$$\mbox{wt}(w+v^{S_t})\geq \mbox{wt}(w)-\mbox{wt}(v^{S_t})= 2^{n-1}-2^{t+1},$$
$$\mbox{wt}(w+v^{S_{t,t}})\geq \mbox{wt}(w)-\mbox{wt}(v^{S_{t,t}})= 2^{n-1}-2^{t+2}.$$
Since we have assumed $n\geq 7$ in this section and we have $t\leq\frac{n-3}{2}$ under the assumption in Theorems \ref{thm A} and \ref{thm B}, it is straightforward to verify that $\wt(w+v^{S_t})> 2^{t+1}$ and $\wt(w+v^{S_{t,t}})> 2^{t+2}$.

From Table \ref{table weightH}, if $\Q$ is hyperbolic, then $\mbox{wt}(w) \geq 2^{n-1}-2^{\frac{n-1}{2}}$. Similarly, since $n\geq 7$, it is straightforward to verify that $\wt(w+v^{S_t})> 2^{t+1}$ and $\wt(w+v^{S_{t,t}})> 2^{t+2}$ with $t$ in the range stated in Theorems \ref{thm A} and \ref{thm B}.
\end{proof}

For any subset $U$ of points of $\PG(n,2)$, denoted by $\widehat{U}$ the set $U \setminus \Q$. Recall that whenever we use the signs $\pm$ or $\mp$, the upper sign is always taken when $\Q$ is elliptic, and lower sign is always taken when $\Q$ is hyperbolic.

\begin{lemma}\label{lemma size hat}
\begin{enumerate}
 \item\label{item alpha} $| \w{\alpha^\perp}|= 2^{n-t-1} \pm 2^{\frac{n-1}{2}}$.
 \item\label{item A} Let $A=(\Pi^\perp\triangle \Pi'^\perp ) \setminus S_{t,t}$. Then
 $|\w{A}| = 2^{n-t-2} \pm 2^{\frac{n-1}{2}} - 2^{t+2}$.
 \item\label{item sigma}
 Let $\Sigma$ be an ($n-1$)-space. Then exactly one of the following holds:
 \begin{enumerate}
 \item\label{item sigmaP} $\Sigma \cap \Q =\Q_{n-1}$; $|\w{\Sigma}|= 2^{n-1}$.
 \item\label{item sigmaC} $\Sigma \cap \Q = \Pi_0 \Q_{n-2}$ where $\Q_{n-2}$ and $\Q$ are both elliptic or both hyperbolic; $|\w{\Sigma} |= 2^{n-1} \pm 2^{\frac{n-1}{2}}$.
 \end{enumerate}
\end{enumerate}
\end{lemma}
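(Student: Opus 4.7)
The plan is to treat the three parts separately, using the quotient $\alpha^\perp/\alpha$ for parts (1) and (2) and a pole--hyperplane analysis for part (3); part (2) is the main obstacle.

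For part (1), since $\alpha \subseteq \Q$ is totally isotropic, $\alpha \subseteq \alpha^\perp$, and the quotient $\alpha^\perp/\alpha$ is a $\PG(n-2t-2,2)$ carrying an induced non-singular quadric $\bar{\Q}$ of the same type as $\Q$. In this setup, $\alpha^\perp \cap \Q$ lifts to a cone $\Pi_t \bar{\Q}$ with vertex $\alpha$, and applying the cone formula \eqref{eqn cone} and subtracting from $|\alpha^\perp| = 2^{n-t}-1$ yields the stated count.

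For part (3), every $(n-1)$-space $\Sigma$ equals $x^\perp$ for a unique point $x$, its pole. A short computation of the radical of the quadratic form restricted to $x^\perp$ (which in characteristic 2 is always $\langle x \rangle$, but is annihilated by the quadratic form precisely when $x \in \Q$) shows that $\Sigma \cap \Q$ is non-singular (hence a parabolic $Q_{n-1}$, since $n-1$ is even) when $x \notin \Q$, and is a cone $\Pi_0 Q_{n-2}$ with vertex $x$ over a non-singular quadric of the same type as $\Q$ when $x \in \Q$. Direct point counts using \eqref{eqn cone} and $|Q_{n-1}| = 2^{n-1}-1$ then give the two values of $|\w{\Sigma}|$.

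Part (2) is the main obstacle. I will compute $|\w{A}| = |A| - |A \cap \Q|$ in two steps. First, using $|\Pi^\perp| = |\Pi'^\perp| = 2^{n-t-1}-1$ and $|\Pi^\perp \cap \Pi'^\perp| = |\langle \Pi,\Pi'\rangle^\perp| = 2^{n-t-2}-1$, I get $|\Pi^\perp \triangle \Pi'^\perp| = 2^{n-t-1}$, hence $|A| = 2^{n-t-1} - 2^{t+2}$ by Lemma \ref{lemma size}. Second, $|A \cap \Q| = 2|\Pi^\perp \cap \Q| - 2|H^\perp \cap \Q|$ with $H := \langle \Pi, \Pi'\rangle$. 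Working in the quotient $\alpha^\perp/\alpha$: $\Pi$ corresponds to a point $\bar{p}$ off $\bar{\Q}$, so by part (3) applied to $\bar{\Q}$ the section $\bar{p}^{\bar{\perp}} \cap \bar{\Q}$ is parabolic, and lifting through the cone construction gives $|\Pi^\perp \cap \Q| = 2^{n-t-2}-1$; similarly $H$ corresponds to an external line $\bar{H}$ to $\bar{\Q}$, and $|\bar{H}^{\bar{\perp}} \cap \bar{\Q}|$ is computed by applying Lemma \ref{lemma polarity} to partition $\bar{\Q}$ across the three hyperplanes $\bar{q}^{\bar{\perp}}$ for $\bar{q} \in \bar{H}$, each of which meets $\bar{\Q}$ in a parabolic quadric by part (3). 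Lifting yields $|H^\perp \cap \Q| = 2^{n-t-3} - 1 \pm 2^{(n-3)/2}$, and the arithmetic collapses to the claimed formula for $|\w{A}|$. The constraint $t \leq (n-5)/2$ in the hyperbolic case ensures $\bar{H}^{\bar{\perp}}$ is non-empty; the extremal elliptic case $t = (n-3)/2$, where $H = \alpha^\perp$ and $H^\perp = \alpha$, is handled directly and is seen to agree with the general formula.
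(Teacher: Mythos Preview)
Your proposal is correct and, for parts (1) and (3), follows the same route as the paper (cone structure over the quotient $\alpha^\perp/\alpha$ for (1); the pole dichotomy for (3), which the paper handles by citing \cite[Theorem 22.8.5]{HirT3}). For part (2) both arguments rest on the same inclusion--exclusion over $\Pi^\perp$, $\Pi'^\perp$, and $\langle\Pi,\Pi'\rangle^\perp$; the paper simply computes $|\widehat{\Pi^\perp}|$ and $|\widehat{\langle\Pi,\Pi'\rangle^\perp}|$ directly and combines them as $|\widehat{A}|=2|\widehat{\Pi^\perp}|-2|\widehat{\langle\Pi,\Pi'\rangle^\perp}|-|S_{t,t}|$, rather than splitting into $|A|-|A\cap\Q|$ as you do. The one substantive difference is in how the count for $H^\perp\cap\Q$ (with $H=\langle\Pi,\Pi'\rangle$) is obtained: the paper invokes the structural fact that this section is a cone $\Pi_t\Q_{n-2t-4}$ with $\Q_{n-2t-4}$ of the \emph{opposite} type to $\Q$, whereas you derive the number self-containedly by partitioning $\bar{\Q}$ via Lemma~\ref{lemma polarity} across the three hyperplanes $\bar q^{\bar\perp}$, $\bar q\in\bar H$, each parabolic by part~(3). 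Your route avoids needing the type-switching behaviour of external-line perps at the cost of a short extra computation; the paper's is quicker but leans on the citation. Your treatment of the extremal elliptic case $t=(n-3)/2$ is also correct and does indeed agree with the general formula.
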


\begin{proof}
\begin{enumerate}
 \item Since $\alpha$ is in $\Q$, by \cite[Theorem 22.8.3]{HirT3}, $\alpha^\perp \cap \Q$ is a cone $\Pi_t Q_{n-2t-2}$ where $Q_{n-2t-2}$ is elliptic if $\Q$ is elliptic, and is hyperbolic otherwise. By \eqref{eqn cone} and Table \ref{table para}, we have
$$|\alpha^\perp \cap \Q|
=(2^{t+1}-1)+2^{t+1}(2^{n-2t-2}\mp2^{\frac{n-2t-3}{2}}-1)
=2^{n-t-1}\mp2^{\frac{n-1}{2}}-1.$$
Since $\alpha^\perp$ is an $(n-t-1)$-space, it follows that
\[
\begin{split}
|\widehat{\alpha^\perp}|
=&|\alpha^\perp|-|\alpha^\perp \cap \Q|\\
=&(2^{n-t}-1)-[2^{n-t-1}\mp2^{\frac{n-1}{2}}-1]
=2^{n-t-1}\pm2^{\frac{n-1}{2}}.
\end{split}
\]
 \item Similar to \eqref{item alpha}, we have
\[
\begin{split}
|\widehat{\Pi^\perp}|
=&|\Pi^\perp|-|\Pi^\perp \cap \Q|=|\Pi^\perp|-|\Pi_t Q_{n-2t-3}|\\
=&(2^{n-t-1}-1)-[(2^{t+1}-1)+2^{t+1}(2^{n-2t-3}-1)]
=2^{n-t-2}
\end{split}
\]
and
\[
\begin{split}
|\w{\left< \Pi, \Pi'\right>^\perp}|
=&|\left< \Pi, \Pi'\right>^\perp|-|(\left< \Pi, \Pi'\right>^\perp)\cap \Q|
=|\left< \Pi, \Pi'\right>^\perp|-|\Pi_t \Q_{n-2t-4} |\\
=&(2^{n-t-2}-1)-[(2^{t+1}-1)+2^{t+1}(2^{n-2t-4} \pm 2^{\frac{n-2t-5}{2}}-1)]\\
=&2^{n-t-3}\mp 2^{\frac{n-3}{2}}.
\end{split}
\]
where $\Q_{n-2t-4}$ is hyperbolic if $\Q$ is elliptic; $\Q_{n-2t-4}$ is elliptic if $\Q$ is hyperbolic. Recall from Lemma \ref{lemma SinPiperp}, $S_{t,t} \subset \Pi^\perp\triangle \Pi'^\perp $. Now using Lemma \ref{lemma size}, we deduce
$$|\w{A}|
=|\widehat{\Pi^\perp}|+|\widehat{\Pi'^\perp}|-2|\w{\left< \Pi, \Pi'\right>^\perp}|-|S_{t,t}|
=2^{n-t-2} \pm 2^{\frac{n-1}{2}} - 2^{t+2}.$$
\item
 By \cite[Theorem 22.8.5]{HirT3}, $\Sigma \cap \Q$ is either (a) $\Q_{n-1}$ or (b) $\Pi_0 \Q_{n-2}$ where $\Q_{n-2}$ and $\Q$ are both elliptic or both hyperbolic. The result follows by \eqref{eqn cone} and Table \ref{table para}.
 \end{enumerate}
\end{proof}

\begin{lemma}\label{lemma size T}
The size of $T_t$ and $T_{t,t}$ are respectively $| T_t |=2^n-2^{n-t-1}$ and $| T_{t,t} |=2^n\pm 2^{\frac{n-1}{2}}-2^{n-t-2}-2^{t+2}$. Furthermore, the following holds:
\begin{enumerate}
 \item $| T_t |>2^{t+1}$.
 \item $| T_t \triangle S_t|>2^{t+1}$.
 \item $| T_{t,t}|>2^{t+2}$.
 \item $| T_{t,t}\triangle S_{t,t}|>2^{t+2}$.
\end{enumerate}
\end{lemma}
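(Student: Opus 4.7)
The plan is to combine the structural descriptions of $T_t$ and $T_{t,t}$ from Theorems \ref{thm A} and \ref{thm B} with the cardinality counts from Lemma \ref{lemma size hat}, and then reduce each symmetric-difference inequality to the corresponding union inequality using Lemma \ref{lemma SinPiperp}.

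First I would compute $|T_t|$. By \eqref{eqn Ts1}, $T_t=(\PG(n,2)\setminus\Q)\setminus \alpha^\perp$, so $|T_t|=|\widehat{\PG(n,2)}|-|\widehat{\alpha^\perp}|$. The total number of non-quadric points is $2^n\pm 2^{(n-1)/2}$ (from Table \ref{table quadric}), and Lemma \ref{lemma size hat}\eqref{item alpha} gives $|\widehat{\alpha^\perp}|=2^{n-t-1}\pm 2^{(n-1)/2}$; subtracting, the $\pm 2^{(n-1)/2}$ terms cancel and yield $|T_t|=2^n-2^{n-t-1}$. Next I would compute $|T_{t,t}|$ from \eqref{eqn Tss1}. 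The two pieces $T_t$ and $\widehat{A}$ (with $A=(\Pi^\perp\triangle\Pi'^\perp)\setminus S_{t,t}$) are disjoint, since $T_t$ lies outside $\alpha^\perp$ while $\widehat{A}\subset \Pi^\perp\triangle\Pi'^\perp\subset\alpha^\perp$ by Lemma \ref{lemma SinPiperp}(2). So $|T_{t,t}|=|T_t|+|\widehat{A}|$, and substituting the value of $|\widehat{A}|$ from Lemma \ref{lemma size hat}\eqref{item A} gives $|T_{t,t}|=2^n\pm 2^{(n-1)/2}-2^{n-t-2}-2^{t+2}$ after the single arithmetic simplification $-2^{n-t-1}+2^{n-t-2}=-2^{n-t-2}$.

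For the four inequalities I would first observe that $S_t\cap T_t=\emptyset$ and $S_{t,t}\cap T_{t,t}=\emptyset$. Indeed, Lemma \ref{lemma SinPiperp} gives $S_t\subset \Pi^\perp\subset\alpha^\perp$ and $S_{t,t}\subset \alpha^\perp$, while $T_t\subset \PG(n,2)\setminus\alpha^\perp$ is disjoint from $\alpha^\perp$; and $T_{t,t}$ is the disjoint union of $T_t$ with $\widehat{A}$, where $\widehat{A}$ by its very definition excludes $S_{t,t}$. Consequently $|T_t\triangle S_t|=|T_t|+|S_t|$ and $|T_{t,t}\triangle S_{t,t}|=|T_{t,t}|+|S_{t,t}|$, so (2) and (4) follow immediately from (1) and (3).

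It then remains to verify (1) $2^n-2^{n-t-1}>2^{t+1}$ and (3) $2^n\pm 2^{(n-1)/2}-2^{n-t-2}-2^{t+2}>2^{t+2}$. For (1), since $1\le t\le(n-3)/2$ we have $n-t-1\le n-2$ and $t+1\le n-2$, so $2^{n-t-1}+2^{t+1}\le 2^{n-1}<2^n$. For (3), the hyperbolic case (lower sign) is the binding one; here $t\le(n-5)/2$, hence $n-t-2\ge(n+1)/2$ and $t+3\le(n+1)/2$, so the extremal configuration gives $2^{n-t-2}+2^{t+3}\le 2\cdot 2^{(n+1)/2}$, and the required inequality reduces to $2^n>2^{(n-1)/2}+2^{(n+3)/2}=5\cdot 2^{(n-1)/2}$, i.e.\ $2^{(n+1)/2}>5$, which is true for $n\ge 7$. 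The elliptic case is analogous and looser. None of these steps is really an obstacle; the only mild care needed is keeping track of the two sign cases and of the slightly different $t$-range for elliptic versus hyperbolic $\Q$.
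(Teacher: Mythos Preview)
Your overall strategy matches the paper's: compute $|T_t|$ and $|T_{t,t}|$ from \eqref{eqn Ts1}, \eqref{eqn Tss1} and Lemma~\ref{lemma size hat}, reduce (2) and (4) to (1) and (3) via the disjointness $S_t\cap T_t=\emptyset$, $S_{t,t}\cap T_{t,t}=\emptyset$, and then verify the two numerical inequalities. The computations of the two cardinalities and the disjointness argument are correct, and your treatment of (1) is fine.

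However, your verification of (3) in the hyperbolic case contains a genuine error. From $t\le\frac{n-5}{2}$ you correctly deduce $n-t-2\ge\frac{n+1}{2}$ and $t+3\le\frac{n+1}{2}$, but these give $2^{n-t-2}\ge 2^{(n+1)/2}$ (a \emph{lower} bound) and $2^{t+3}\le 2^{(n+1)/2}$; they do \emph{not} imply $2^{n-t-2}+2^{t+3}\le 2\cdot 2^{(n+1)/2}$. In fact the sum $2^{n-t-2}+2^{t+3}$ is decreasing in $t$ on the range $1\le t\le\frac{n-5}{2}$ (the two exponents have constant sum $n+1$), so its maximum occurs at $t=1$, not at $t=\frac{n-5}{2}$. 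For instance with $n=9$, $t=1$ one gets $2^{6}+2^{4}=80>64=2\cdot 2^{5}$. Thus your ``extremal configuration'' bound is false and the reduction to $2^{(n+1)/2}>5$ is unjustified.

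The paper avoids this by factoring: writing $|T_{t,t}|-2^{t+2}=2^{n-t-2}(2^{t+2}-1)\pm 2^{(n-1)/2}-2^{t+3}$ and using $2^{t+2}-1\ge 7$ (since $t\ge 1$) to get the lower bound $7\cdot 2^{n-t-2}\pm 2^{(n-1)/2}-2^{t+3}$, which is then easily seen to be positive in each case. An equally easy repair of your argument is to bound the two terms separately using $t\ge 1$ (so $2^{n-t-2}\le 2^{n-3}$) and $t\le\frac{n-5}{2}$ (so $2^{t+3}\le 2^{(n+1)/2}$), giving $2^{n-t-2}+2^{t+3}\le 2^{n-3}+2^{(n+1)/2}$, after which $2^n-2^{(n-1)/2}>2^{n-3}+2^{(n+1)/2}$ is immediate for $n\ge 7$.
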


\begin{proof}
Using \eqref{eqn Ts1} and Lemma \ref{lemma size hat}\eqref{item alpha}, we obtain
$$|T_t|= | \PG(n,2)|-|\Q |-| \w{\alpha^\perp}|
= (2^{n+1}-1)-(2^n\mp2^{\frac{n-1}{2}}-1)-(2^{n-t-1} \pm 2^{\frac{n-1}{2}})
=2^n-2^{n-t-1}.$$
Since $0<t\leq\frac{n-3}{2}$, we have
$$|T_t|-2^{t+1}
=2^{n-t-1}(2^{t+1}-1)-2^{t+1}
> 3\cdot 2^{n-t-1}-2^{t+1}>0.$$
So, $|T_t|>2^{t+1}$.

Using \eqref{eqn Tss1} and Lemma \ref{lemma size hat}\eqref{item A}, we have
$$|T_{t,t}| = |T_t|+|\w{A}|
=2^n-2^{n-t-2} \pm 2^{\frac{n-1}{2}}-2^{t+2}$$
where $A=(\Pi^\perp\triangle \Pi'^\perp ) \setminus S_{t,t}$. Because of $t>0$, we have
$$|T_{t,t} |- 2^{t+2}
= 2^{n-t-2}(2^{t+2}-1)\pm 2^{\frac{n-1}{2}}-2^{t+3}
\geq 7 \cdot 2^{n-t-2} \pm 2^{\frac{n-1}{2}}-2^{t+3}.$$
When $\Q$ is elliptic, $t\leq \frac{n-3}{2}$ and so $$7 \cdot 2^{n-t-2}+2^{\frac{n-1}{2}} -2^{t+3}>0.$$
When $\Q$ is hyperbolic, $t\leq \frac{n-5}{2}$ and so
$$7 \cdot 2^{n-t-2}-2^{\frac{n-1}{2}} -2^{t+3} \geq 7 \cdot 2^{\frac{n-1}{2}} -2^{\frac{n-1}{2}} -2^{t+3}>0.$$
In both cases, $|T_{t,t} |> 2^{t+2}$. The results of $T_{t} \triangle S_{t}$ and $T_{t,t} \triangle S_{t,t}$ follow because of
$T_{t} \cap S_{t}=\emptyset$ and $T_{t,t} \cap S_{t,t}=\emptyset$.
\end{proof}

\begin{lemma}\label{lemma size RTS}
Let $R=(\PG(n,2)\setminus \Q ) \setminus \Sigma$ for some $(n-1)$-space $\Sigma$ of $\PG(n,2)$.
Then the following holds:
\begin{enumerate}
 \item $|R \triangle T_t| >2^{t+1}$.
 \item $|R \triangle T_t \triangle S_t|>2^{t+1}$.
 \item $|R \triangle T_{t,t}|>2^{t+2}$.
 \item $|R \triangle T_{t,t}\triangle S_{t,t}|>2^{t+2}$.
\end{enumerate}
\end{lemma}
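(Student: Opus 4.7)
The approach reduces each of (1)--(4) to a single numerical inequality. The key tool is the elementary bound $|A\triangle B|\ge\bigl||A|-|B|\bigr|$, combined with the sizes of $R$, $T_t$, $T_{t,t}$, $S_t$, $S_{t,t}$ that have already been computed. Specifically, the weight distributions in Tables~\ref{table weightE} and \ref{table weightH} give
\[
|R|\le 2^{n-1}+2^{(n-1)/2}\text{ if }\Q\text{ is elliptic},\qquad |R|\le 2^{n-1}\text{ if }\Q\text{ is hyperbolic};
\]
Lemma~\ref{lemma size T} gives $|T_t|=2^n-2^{n-t-1}$ and $|T_{t,t}|=2^n\pm 2^{(n-1)/2}-2^{n-t-2}-2^{t+2}$; and Lemma~\ref{lemma size} gives $|S_t|=2^{t+1}$, $|S_{t,t}|=2^{t+2}$. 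Since $|T_t|$ and $|T_{t,t}|$ both visibly exceed the largest possible $|R|$, the elementary bound specialises to $|R\triangle T_t|\ge|T_t|-|R|$ and $|R\triangle T_{t,t}|\ge|T_{t,t}|-|R|$.

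I would handle parts (1) and (3) first. Substituting the sizes above gives, in the elliptic case,
\[
|R\triangle T_t|\ge 2^{n-1}-2^{n-t-1}-2^{(n-1)/2},\qquad |R\triangle T_{t,t}|\ge 2^{n-1}-2^{n-t-2}-2^{t+2},
\]
with corresponding rearrangements of the $\pm 2^{(n-1)/2}$ terms in the hyperbolic case. A direct numerical check, using the admissible range $0<t\le(n-3)/2$ (elliptic) or $0<t\le(n-5)/2$ (hyperbolic) together with the hypothesis $n\ge 7$, shows each bound strictly exceeds the target $2^{t+1}$ or $2^{t+2}$. For parts (2) and (4) I apply the same triangle-style inequality once more, giving $|R\triangle T_t\triangle S_t|\ge|R\triangle T_t|-2^{t+1}$ and $|R\triangle T_{t,t}\triangle S_{t,t}|\ge|R\triangle T_{t,t}|-2^{t+2}$. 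It then suffices to verify $|R\triangle T_t|>2^{t+2}$ and $|R\triangle T_{t,t}|>2^{t+3}$, which is the previous calculation with an extra factor of~$2$ absorbed.

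The main obstacle is that the resulting inequalities are tight at the extreme parameters: the endpoint $t=(n-3)/2$ in the elliptic case and $t=(n-5)/2$ in the hyperbolic case, both combined with the smallest admissible dimension $n=7$. At those boundary points each of the four inequalities collapses to an estimate of the form $2^{(n-1)/2}>c$ for a small constant $c\in\{5,6,7\}$; this holds exactly because $n\ge 7$ gives $2^{(n-1)/2}\ge 8$. Away from the boundary one has substantial slack, so the whole verification is mechanical once the lower bounds on $|R\triangle T_t|$ and $|R\triangle T_{t,t}|$ are in hand, provided one keeps the two types of $\Q$ separate since the admissible range of $t$ and the sign of $2^{(n-1)/2}$ differ in the two cases.
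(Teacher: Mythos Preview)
Your approach is correct and genuinely different from the paper's. The paper passes to complements, writing $R\triangle T_t=\widehat{\Sigma}\triangle\widehat{\alpha^\perp}$ and then bounding this below by $|\widehat{\Sigma}\setminus\widehat{\alpha^\perp}|$; it then splits into the two geometric cases $\Sigma\cap\Q=\Q_{n-1}$ (parabolic) and $\Sigma\cap\Q=\Pi_0\Q_{n-2}$ (cone) from Lemma~\ref{lemma size hat}(\ref{item sigma}), and in each case estimates $|\widehat{\Sigma}\setminus\widehat{\alpha^\perp}|$ using the sizes of $\widehat{\Sigma}$, $\Sigma\cap\alpha^\perp$, $\widehat{\alpha^\perp}$ and $\widehat{A}$. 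Your argument bypasses all this structure by applying the crude bound $|A\triangle B|\ge\bigl||A|-|B|\bigr|$ directly to the globally known sizes of $R$, $T_t$, $T_{t,t}$; since $|T_t|$ and $|T_{t,t}|$ dominate the maximum weight in Tables~\ref{table weightE}--\ref{table weightH} by a comfortable margin once $n\ge 7$, the four inequalities reduce to checking $2^{(n-1)/2}>c$ for small constants, exactly as you say. Your route is shorter and avoids the case split on the type of $\Sigma\cap\Q$; the paper's route is more geometric and yields somewhat sharper intermediate bounds (e.g.\ it tracks $|\widehat{\Sigma}\setminus\widehat{\alpha^\perp}|$ rather than just $|\widehat{\Sigma}|-|\widehat{\alpha^\perp}|$), but that extra precision is not needed for the lemma as stated. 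One cosmetic point: your list of boundary constants $c\in\{5,6,7\}$ is not quite right (the actual values that arise are $4,5,7$), but this does not affect the conclusion since all are below $2^{(7-1)/2}=8$.
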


\begin{proof}
The complement $R^c$ of $R$ in $\PG(n,2)\setminus \Q$ is
\[
R^c=\w{\Sigma}.
\]
Let $A:= ((\Pi^\perp \triangle \Pi'^\perp)\setminus S_{t,t} )\setminus \Q$. By \eqref{eqn Ts1} and \eqref{eqn Tss1}, we have
\begin{equation}
\begin{split}
{T_t}^c &= \w{\alpha^\perp }, \\
T_{t,t} &=T_t \cup A.
\end{split}
\end{equation}
Recall for any subsets $U_1$, $U_2$, $U_3$ of $\PG(n,2)\setminus \Q$,
we have
$U_1 \triangle U_2 = U_1^c \triangle U_2^c$; $(U_1 \cup U_2)^c = U_1^c \cap U_2^c$; $(U_1 \triangle U_2 ) \triangle U_3 = U_1 \triangle (U_2 \triangle U_3)$; $U_1 \triangle U_2 \supset U_1 \setminus U_2$, and equality holds if and only if
 $U_1 \subset U_2$.
Further because of $S_{t}, S_{t,t} \subset \alpha^\perp$ by Lemma \ref{lemma SinPiperp} and $S_{t,t}\cap A =\emptyset$, we have
\begin{equation}\label{eqn 1}
\begin{split}
R \triangle T_t&
=\w{\Sigma} \triangle \w{\alpha^\perp}
\supset \w{\Sigma} \setminus \w{\alpha^\perp};\\
R \triangle T_t \triangle S_t&
=(\w{\Sigma} \triangle \w{\alpha^\perp} )\triangle S_t
=\w{\Sigma} \triangle (\w{\alpha^\perp} \setminus S_t )
\supset\w{\Sigma} \setminus (\w{\alpha^\perp} \setminus S_t )
\supset\w{\Sigma} \setminus \w{\alpha^\perp}
;\\
R \triangle T_{t,t}&
=\w{\Sigma} \triangle (\w{\alpha^\perp} \cap \w{A^c})
=\w{\Sigma} \triangle (\w{\alpha^\perp} \setminus \w{A})
\supset \w{\Sigma} \setminus (\w{\alpha^\perp} \setminus \w{A});\\
R \triangle T_{t,t}\triangle S_{t,t}&
=\w{\Sigma} \triangle [(\w{\alpha^\perp} \setminus \w{A})\setminus S_{t,t}]
\supset \w{\Sigma} \setminus [(\w{\alpha^\perp} \setminus \w{A})\setminus S_{t,t}]
\supset \w{\Sigma} \setminus (\w{\alpha^\perp} \setminus \w{A}).
\end{split}
\end{equation}
Thus, it suffices to show (i) $|\w{\Sigma} \setminus \w{\alpha^\perp} |>2^{t+1}$
and (ii) $|\w{\Sigma} \setminus (\w{\alpha^\perp} \setminus \w{A})|>2^{t+2}$ for $t$ within the range mentioned in Theorems \ref{thm A} and \ref{thm B}. By \cite[Theorem 22.8.3]{HirT3}, $\Sigma \cap \Q$ is either (a) a parabolic quadric $\Q_{n-1}$, or (b) a point cone $\Pi_0 \Q_{n-2}$ where $\Q_{n-2}$ and $\Q$ are both elliptic or both hyperbolic.

\begin{enumerate}
\item[(a)]
If $\Sigma\cap \Q = Q_{n-1}$, then by \cite[Theorem 22.7.2]{HirT3}, we have $\Sigma^\perp \notin \Q$ and so $\Sigma^\perp \notin \alpha$. By the definition of a polarity, we have $\alpha^\perp \not\subset \Sigma$. Since $\Sigma$ is a hyperplane and $\alpha^\perp$ is an ($n-1-t$)-space, $\Sigma \cap \alpha^\perp$ is a ($n-2-t$)-space.

\begin{enumerate}
 \item[(i)]
By Lemma \ref{lemma size hat}\eqref{item sigmaP} and $0<t\leq \frac{n-3}{2}$, we have
\[
\begin{split}
|\w{\Sigma} \setminus \w{\alpha^\perp}|-2^{t+1}
\geq& |\w{\Sigma}|-|\Sigma \cap\alpha^\perp|-2^{t+1}\\
=& 2^{n-1}-(2^{n-t-1}-1)-2^{t+1}
= 2^{n-t-1}(2^t-1) +1-2^{t+1}\\
\geq&2^{n-t-1}-2^{t+1}+1
>0.
\end{split}
\]
 \item[(ii)]
Similarly, since $\w{\Sigma} \setminus (\w{\alpha^\perp}\setminus \w{A}) \subset \w{\Sigma} \setminus \w{\alpha^\perp}$, we have
\[
\begin{split}
|\w{\Sigma} \setminus (\w{\alpha^\perp}\setminus \w{A})|-2^{t+2}
\geq& |\w{\Sigma}|-|\Sigma \cap\alpha^\perp|-2^{t+2}\\
\geq&2^{n-t-1}-2^{t+2}+1
>0.
\end{split}
\]
\end{enumerate}
\item[(b)]
\begin{enumerate}
\item[(i)]
If $\Sigma\cap \Q=\Pi_0 \Q_{n-2}$, then by Lemma \ref{lemma size hat}(\ref{item alpha},\ref{item sigmaC}) and because of $t>0$, we have
\[
\begin{split}
|\w{\Sigma}\setminus\w{\alpha^\perp}|-2^{t+1}
\geq& |\w{\Sigma}|-|\w{\alpha^\perp}|-2^{t+1}\\
=& (2^{n-1} \pm 2^{\frac{n-1}{2}})-(2^{n-t-1} \pm 2^{\frac{n-1}{2}})-2^{t+1}\\
=& 2^{n-1-t}(2^{t}-1)-2^{t+1}
\geq 2^{n-1-t}-2^{t+1} > 0.
\end{split}
\]

\item[(ii)]
If $\Sigma\cap \Q=\Pi_0 \Q_{n-2}$, then by Lemma \ref{lemma size hat}(\ref{item alpha},\ref{item A},\ref{item sigmaC}) and because of $t>0$, we have
\begin{equation}\label{eqn b}
\begin{split}
\hspace{2cm}
&|\w{\Sigma} \setminus (\w{\alpha^\perp} \setminus \w{A})|-2^{t+2}\\
\geq& |\w{\Sigma}|-|\w{\alpha^\perp}|+|\w{A}|-2^{t+2}\\
=& (2^{n-1} \pm 2^{\frac{n-1}{2}})-(2^{n-t-1} \pm 2^{\frac{n-1}{2}})
+(2^{n-t-2} \pm 2^{\frac{n-1}{2}} - 2^{t+2})-2^{t+2}\\
=& 2^{n-2-t}(2^{t+1}-1) \pm 2^{\frac{n-1}{2}} - 2^{t+3}\\
\geq& 3\cdot 2^{n-2-t} \pm 2^{\frac{n-1}{2}} - 2^{t+3}
\end{split}
\end{equation}
where the last equality holds if and only if $t=1$.

If $\Q$ is elliptic, then because of $t\leq \frac{n-3}{2}$, we have
\begin{equation}\label{eqn c}
3\cdot 2^{n-2-t} + 2^{\frac{n-1}{2}} - 2^{t+3}\geq 0
\end{equation}
where the equality holds if and only if $t=\frac{n-3}{2}$. Because of $n\geq 7$, it is impossible to have $1=t=\frac{n-3}{2}$. Combining \eqref{eqn b} and \eqref{eqn c}, we have $|\w{\Sigma} \setminus (\w{\alpha^\perp} \setminus \w{A})|>2^{t+2}$.

If $\Q$ is hyperbolic, then because of $t\leq \frac{n-5}{2}$, we have
\begin{equation}\label{eqn d}
3\cdot 2^{n-2-t} - 2^{\frac{n-1}{2}} - 2^{t+3}> 0.
\end{equation}
Combining \eqref{eqn b} and \eqref{eqn d}, we have $|\w{\Sigma} \setminus (\w{\alpha^\perp} \setminus \w{A})|>2^{t+2}$.
\end{enumerate}
\end{enumerate}
\end{proof}

\begin{proposition}\label{prop char1}
Let $u$ be a non-zero vector in $C_t$. Then $\wt(u) \geq 2^{t+1}$, and equality holds if and only if $u=v^{S_{t}}$.
\end{proposition}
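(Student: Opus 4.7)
The plan is to run a case analysis over the natural presentation of an element of $C_t$, using the weight bounds already prepared in Lemmas \ref{lemma size ws}, \ref{lemma size T}, and \ref{lemma size RTS}. Every $u\in C_t$ has the form $u = w + \epsilon_1 v^{S_t}+\epsilon_2 v^{T_t}$ with $w\in C(\Gamma_\Q)$ and $\epsilon_1,\epsilon_2\in\{0,1\}$; moreover, by Section \ref{sc terminology}, each non-zero $w$ equals $v^R$ where $R = (\PG(n,2)\setminus\Q)\setminus\Sigma$ for some $(n-1)$-space $\Sigma$. The strategy is to isolate the distinguished tuple $(w,\epsilon_1,\epsilon_2)=(0,1,0)$, which yields $u=v^{S_t}$ with $\wt(u)=|S_t|=2^{t+1}$ by Lemma \ref{lemma size}, and to show every other non-zero tuple produces a strictly larger weight.

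The remaining sub-cases line up with the preparatory lemmas as follows. If $w\neq 0$ and $\epsilon_1=\epsilon_2=0$, then Tables \ref{table weightE} and \ref{table weightH} give $\wt(u)\geq 2^{n-1}-2^{(n-1)/2}$, which strictly exceeds $2^{t+1}$ under the standing assumptions $n\geq 7$ and $t\leq (n-3)/2$. If $w=0$ but $\epsilon_2=1$, then $u$ equals $v^{T_t}$ or $v^{S_t\triangle T_t}$, and Lemma \ref{lemma size T} provides the bounds $|T_t|>2^{t+1}$ and $|S_t\triangle T_t|>2^{t+1}$. If $w\neq 0$, $\epsilon_1=1$, $\epsilon_2=0$, then $\wt(u)=\wt(w+v^{S_t})>2^{t+1}$ directly by Lemma \ref{lemma size ws}. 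Finally, if $w=v^R\neq 0$ and $\epsilon_2=1$, then $u$ is $v^{R\triangle T_t}$ or $v^{R\triangle T_t\triangle S_t}$, and Lemma \ref{lemma size RTS} supplies $\wt(u)>2^{t+1}$ in both cases.

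Combining these observations shows $\wt(u)\geq 2^{t+1}$ for every non-zero $u\in C_t$, with equality possible only in the distinguished case $u=v^{S_t}$, which establishes both assertions of the proposition. No step requires ingredients beyond the preparatory lemmas, so there is no substantive obstacle; the only point of mild care is that the decomposition of $u$ as $w+\epsilon_1 v^{S_t}+\epsilon_2 v^{T_t}$ need not be unique, but this is immaterial since the analysis bounds $\wt(u)$ for every representation and $\wt(u)$ itself depends only on $u$.
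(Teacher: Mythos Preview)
Your proof is correct and follows essentially the same approach as the paper: both enumerate the possible forms $u=w+\epsilon_1 v^{S_t}+\epsilon_2 v^{T_t}$ with $w\in C(\Gamma_\Q)$ and dispatch each non-distinguished case via Tables \ref{table weightE}--\ref{table weightH} and Lemmas \ref{lemma size}, \ref{lemma size ws}, \ref{lemma size T}, \ref{lemma size RTS}. Your remark on the possible non-uniqueness of the representation is a nice point of care that the paper leaves implicit.
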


\begin{proof}
Let $u$ be a non-zero vector in $C_t$. Then $u$ is one of the following:
$w$, $w+ v^{S_t}$, $w + v^{T_t}$, $w + v^{T_t} + v^{S_t}$, $v^{T_t}$, $w^{T_t} + v^{S_t}$ or $v^{S_t}$ for some $w\in C(\Gamma_{\Q})$. By Tables \ref{table weightE} and \ref{table weightH}, $\wt(w) > 2^{t+1}$, and by Lemma \ref{lemma size ws}, $\wt(w+ v^{S_t}) > 2^{t+1}$. Note that for any subsets $U_1$, $U_2$ of $\PG(n,2)\setminus \Q$, $v^{U_1}+v^{U_2}=v^{U_1 \triangle U_2}$. The result follows from Lemmas \ref{lemma size}, \ref{lemma size T} and \ref{lemma size RTS} because $w=v^R$ where $R=(\PG(n,2)\setminus \Q ) \setminus \Sigma$ for some $(n-1)$-space $\Sigma$.
\end{proof}

\begin{proposition}\label{prop char2}
Let $u$ be a non-zero vector in $C_{t,t}$. Then $\wt(u) \geq 2^{t+2}$, and equality holds if and only if $u=v^{S_{t,t}}$.
\end{proposition}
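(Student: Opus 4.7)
The plan is to mimic the proof of Proposition \ref{prop char1} verbatim, replacing $S_t$, $T_t$ throughout with $S_{t,t}$, $T_{t,t}$ and invoking the $(t,t)$-versions of the supporting lemmas proved in this and the previous section. First I would record that
\[
C_{t,t} \;=\; \langle C(\Gamma_{\Q}),\, v^{S_{t,t}},\, v^{T_{t,t}} \rangle,
\]
which holds because each row $\ddot{r}_x$ of the adjacency matrix of $\Gamma_{\Q,t,t}$ equals the corresponding row $r_x$ of $\Gamma_{\Q}$ plus an element of $\langle v^{S_{t,t}},\, v^{T_{t,t}} \rangle$ (namely $0$, $v^{S_{t,t}}$, or $v^{T_{t,t}}$ according to whether $x \notin S_{t,t}\cup T_{t,t}$, $x\in T_{t,t}$, or $x\in S_{t,t}$), and because Lemmas \ref{lemma char S} and \ref{lemma char T} already place $v^{S_{t,t}}$ and $v^{T_{t,t}}$ inside $C(\Gamma_{\Q,t,t})$. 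Consequently, every non-zero $u \in C_{t,t}$ takes one of the seven forms
\[
w,\; w + v^{S_{t,t}},\; w + v^{T_{t,t}},\; w + v^{S_{t,t}} + v^{T_{t,t}},\; v^{S_{t,t}},\; v^{T_{t,t}},\; v^{S_{t,t}} + v^{T_{t,t}}
\]
for some $w \in C(\Gamma_{\Q})$.

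The second step is to dispose of each case by reading off a bound from an already-proved lemma, using the identity $v^{U_1} + v^{U_2} = v^{U_1 \triangle U_2}$ and the explicit form $w = v^R$ with $R = (\PG(n,2) \setminus \Q) \setminus \Sigma$ for some $(n-1)$-space $\Sigma$. Specifically: $\wt(w) > 2^{t+2}$ from Tables \ref{table weightE} and \ref{table weightH}; $\wt(w + v^{S_{t,t}}) > 2^{t+2}$ from Lemma \ref{lemma size ws}; $\wt(w + v^{T_{t,t}}) > 2^{t+2}$ and $\wt(w + v^{S_{t,t}} + v^{T_{t,t}}) > 2^{t+2}$ from parts~(3) and~(4) of Lemma \ref{lemma size RTS}; $\wt(v^{T_{t,t}}) > 2^{t+2}$ and $\wt(v^{T_{t,t}} + v^{S_{t,t}}) > 2^{t+2}$ from parts~(3) and~(4) of Lemma \ref{lemma size T}; and finally $\wt(v^{S_{t,t}}) = 2^{t+2}$ by Lemma \ref{lemma size}. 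This gives $\wt(u) \geq 2^{t+2}$ with equality exactly when $u = v^{S_{t,t}}$.

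Most of the work has been pre-packaged in the earlier lemmas, so no substantial obstacle remains; the only small verification is that a non-zero $w \in C(\Gamma_{\Q})$ has weight strictly greater than $2^{t+2}$ at the extremes of the permitted $t$-range. For $\Q$ elliptic with $t = (n-3)/2$ this amounts to $2^{n-1} > 2^{(n+1)/2}$; for $\Q$ hyperbolic with $t = (n-5)/2$ it amounts to $2^{n-1} - 2^{(n-1)/2} > 2^{(n-1)/2}$, i.e.\ again $2^{n-1} > 2^{(n+1)/2}$. Both inequalities reduce to $n > 3$, which is well within the standing hypothesis $n \geq 7$.
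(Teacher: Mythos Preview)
Your proposal is correct and follows exactly the approach the paper intends: its own proof simply says the argument is analogous to that of Proposition~\ref{prop char1}, and you have carried out precisely that substitution, invoking the $(t,t)$-parts of Lemmas~\ref{lemma size}, \ref{lemma size ws}, \ref{lemma size T}, and \ref{lemma size RTS} together with Tables~\ref{table weightE} and \ref{table weightH}. Your preliminary identification $C_{t,t}=\langle C(\Gamma_{\Q}),\,v^{S_{t,t}},\,v^{T_{t,t}}\rangle$ just makes explicit what the paper uses implicitly (and records via \eqref{eqn AAK2} in the next section).
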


\begin{proof}
It follows using arguments that are similar to those in the proof of Proposition \ref{prop char1}.
\end{proof}

\section{Numbers of switched graphs found}\label{sc number}

With the notation as given in Section \ref{sc rank} for $n$, $\Q$, $\perp$, $t$, $\alpha$, $\Pi$, $\Pi'$, $\Gamma_{\Q}$, $\Gamma_{\Q,t}$, $\Gamma_{\Q,t,t}$, $S_t$, $S_{t,t}$, $T_t$ and $T_{t,t}$, we assume $n \geq 7$. In this section, we will prove $C(\Gamma_{\Q,t})=C_t$ and $C(\Gamma_{\Q,t,t})=C_{t,t}$ as claimed in Section \ref{sc rank}, and then count the number of non-isomorphic graphs constructed through Theorems \ref{thm A} and \ref{thm B}.

Let $A,A_t,A_{t,t}$ be the adjacency matrices of $\Gamma_{\Q}$, $\Gamma_{\Q,t}$ and $\Gamma_{\Q,t,t}$.

Since $S_t \subset S_{t,t}$ and $T_{t} \subset T_{t,t}$, we may assume that the first $|S_t|$ rows and columns of $A,A_t,A_{t,t}$ correspond to points of $\PG(n,2)\setminus \Q$ in $S_t$; the next $|S_{t,t}\setminus S_{t}|$ rows and columns correspond to those in $S_{t,t}\setminus S_t$; the last $|T_{t,t}|$ rows and columns correspond to points in $T_{t,t}$ such that the last $|T_{t}|$ rows and columns correspond to points in $T_{t}$. By the definition of $\Gamma_{\Q,t}$,
\begin{equation}\label{eqn AAK}
A_t = A + M_t,\mbox{where }M_t =
\left(
\begin{array}{ccc}
O & O & J_t \\
O & O & O \\
J_t' & O & O
\end{array}
\right)
\end{equation}
where $J_t$ is the $|S_t|$-by-$|T_t|$ all-ones matrix. Similarly,
by the definition of $\Gamma_{\Q,t,t}$,
\begin{equation}\label{eqn AAK2}
\ A_{t,t} = A + M_{t,t}, \mbox{where }M_{t,t}=
\left(
\begin{array}{ccc}
O & O & J_{t,t} \\
O & O & O \\
J_{t,t}' & O & O
\end{array}
\right)
\end{equation}
where $J_{t,t}$ is the $|S_{t,t}|$-by-$|T_{t,t}|$ all-ones matrix.

\begin{lemma}\label{lemma TnotinC}
None of $v^{T_t}$ or $v^{T_{t,t}}$ is in $C(\Gamma_{\Q})$.
\end{lemma}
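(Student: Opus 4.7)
The plan is to argue by comparing weights. Recall from Tables \ref{table weightE} and \ref{table weightH} that every nonzero codeword of $C(\Gamma_\Q)$ has weight in the set
\[
W := \{2^{n-1},\ 2^{n-1} - 2^{(n-1)/2},\ 2^{n-1} + 2^{(n-1)/2}\},
\]
so it suffices to show $|T_t|\notin W$ and, taking complements in $V_\Q$, that $|V_\Q\setminus T_{t,t}|\notin W$ as well (Lemma \ref{lemma size hat}\eqref{item sigma} confirms that the complement in $V_\Q$ of any nonzero codeword also has size in $W$). Positivity of both $|T_t|$ and $|T_{t,t}|$ follows from Lemma \ref{lemma size T}, so the zero codeword need not be considered.

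For $v^{T_t}$, Lemma \ref{lemma size T} gives $|T_t|=2^n-2^{n-t-1}$. Since $t>0$, the equation $|T_t|=2^{n-1}$ has no solution. The equations $|T_t|=2^{n-1}\mp 2^{(n-1)/2}$ rearrange to
\[
2^{n-t-1} \;=\; 2^{n-1}\pm 2^{(n-1)/2} \;=\; 2^{(n-1)/2}\bigl(2^{(n-1)/2}\pm 1\bigr);
\]
since $n\geq 7$, the odd factor $2^{(n-1)/2}\pm 1\geq 2^3-1=7$ is odd and greater than one, so the right-hand side is not a power of $2$, a contradiction.

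For $v^{T_{t,t}}$, I compute the complement size using Lemma \ref{lemma size T} and $|V_\Q|=2^n\pm 2^{(n-1)/2}$:
\[
|V_\Q\setminus T_{t,t}| \;=\; 2^{n-t-2}+2^{t+2}.
\]
The case $2^{n-t-2}+2^{t+2}=2^{n-1}$ forces $n-t-2=t+2$, i.e.\ $n=2t+4$, which is impossible for odd $n$. The case $2^{n-t-2}+2^{t+2}=2^{n-1}+2^{(n-1)/2}$ is an equality of two sums of distinct powers of $2$; uniqueness of binary expansion forces $\{n-t-2,t+2\}=\{n-1,(n-1)/2\}$, leading either to $t=-1$ or to $t=(n-3)/2=n-3$, both excluded in the admissible range. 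The remaining case $2^{n-t-2}+2^{t+2}=2^{n-1}-2^{(n-1)/2}$ rewrites as
\[
2^{n-t-2}+2^{t+2}+2^{(n-1)/2} \;=\; 2^{n-1}.
\]
Here I invoke the elementary $2$-adic fact that $2^a+2^b+2^c=2^d$ with $a\leq b\leq c$ forces $a=b$ and $c=a+1,\ d=a+2$, so two of the three exponents $\{n-t-2,\ t+2,\ (n-1)/2\}$ must coincide and the third equal their common value plus one. Case (I) $n-t-2=t+2$ gives $n$ even; cases (II) and (III), matching $(n-1)/2$ with either $n-t-2$ or $t+2$, both force $n-1=(n+3)/2$, i.e.\ $n=5$, contradicting $n\geq 7$.

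The main technical obstacle is the last case for $T_{t,t}$, where I must rule out $2^{n-t-2}+2^{t+2}=2^{n-1}-2^{(n-1)/2}$; the trick is to move the $2^{(n-1)/2}$ term to the left and invoke the uniqueness-of-binary-representation argument for three powers summing to a power. Once these arithmetic checks are done, the conclusion that neither $v^{T_t}$ nor $v^{T_{t,t}}$ can be a codeword of $C(\Gamma_\Q)$ is immediate.
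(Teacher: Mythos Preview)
Your argument is correct and complete. The underlying idea is the same as the paper's: rule out membership in $C(\Gamma_\Q)$ by showing that the relevant cardinalities cannot match those coming from a hyperplane section $\widehat{\Sigma}$. The paper phrases this as ``if $v^{T_t}$ were a codeword then $\widehat{\alpha^\perp}=\widehat{\Sigma}$, and comparing $|\widehat{\alpha^\perp}|$ with the values in Lemma~\ref{lemma size hat}\eqref{item sigma} forces $t=0$ or $n=3$'', whereas you compare $|T_t|$ directly with the weight set $W$; these are equivalent computations.

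Where your write-up genuinely adds something is the $v^{T_{t,t}}$ case: the paper's printed proof treats only $v^{T_t}$ and is silent on $v^{T_{t,t}}$. Your passage to the complement $|V_\Q\setminus T_{t,t}|=2^{n-t-2}+2^{t+2}$ (legitimate since, by Lemma~\ref{lemma size hat}\eqref{item sigma}, the complement of any nonzero codeword support also has size in $W$) and the ensuing $2$-adic case analysis fills that gap cleanly. The key step---rewriting $2^{n-t-2}+2^{t+2}+2^{(n-1)/2}=2^{n-1}$ and using that three powers of two can sum to a power of two only in the pattern $2^a+2^a+2^{a+1}=2^{a+2}$---is exactly the right trick, and the resulting constraints ($n$ even, or $n=5$) are correctly excluded by the standing hypothesis $n\geq 7$.

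One cosmetic remark: in your Case~2 for $T_{t,t}$ you invoke uniqueness of binary expansion, which tacitly uses $n-t-2\neq t+2$; this is automatic since $n$ is odd, but it would not hurt to say so. Otherwise the proof is clean and, for $v^{T_{t,t}}$, more detailed than what the paper provides.
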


\begin{proof}
Suppose $v^{T_t}$ is in $C(\Gamma_{\Q})$. Recall any codeword in $C(\Gamma_{\Q})$ is $v^R$ where $R=(\PG(n,2)\setminus \Q ) \setminus \Sigma$ for some $(n-1)$-space $\Sigma$. By \eqref{eqn Ts1}, $$(\PG(n,2)\setminus \Q ) \setminus \alpha^\perp = (\PG(n,2)\setminus \Q ) \setminus \Sigma.$$ This implies $\Sigma \setminus \Q = \alpha^\perp\setminus \Q$. Considering the size of $\Sigma \setminus \Q$ and $ \alpha^\perp\setminus \Q$ given in Lemma \ref{lemma size hat}, we have $n=3$ or $t=0$, which contradicts the range of $n$ and $t$ stated in Theorem \ref{thm A} or Theorem \ref{thm B}.
\end{proof}

We now prove $C(\Gamma_{\Q,t})=C_t$ and $C(\Gamma_{\Q,t,t})=C_{t,t}$ as announced in Section \ref{sc rank}.

\begin{lemma}\label{lemma basis1}
$C(\Gamma_{\Q,t})=\left<C(\Gamma_{\Q,t}),v^{S_t},v^{T_t}\right>$ and the 2-rank of $C(\Gamma_{\Q,t})$ is $n+3$.
\end{lemma}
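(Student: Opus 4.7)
The plan is to prove $C(\Gamma_{\Q,t}) = C_t$ by a double inclusion read off directly from the decomposition $A_t = A + M_t$ in \eqref{eqn AAK}, and then determine the dimension by checking that exactly two new cosets are added to $C(\Gamma_{\Q})$.

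For $C(\Gamma_{\Q,t}) \subseteq C_t$, I would use the block form of $M_t$: the rows of $M_t$ indexed by points of $S_t$ equal $v^{T_t}$, the rows indexed by points of $T_t$ equal $v^{S_t}$, and all remaining rows are zero. Hence each row of $A_t$ is the corresponding row of $A$ plus one of $0$, $v^{S_t}$, $v^{T_t}$, so every generator of $C(\Gamma_{\Q,t})$ lies in $\langle C(\Gamma_{\Q}), v^{S_t}, v^{T_t}\rangle = C_t$. For the reverse inclusion, Lemmas \ref{lemma char S} and \ref{lemma char T} already give $v^{S_t}, v^{T_t} \in C(\Gamma_{\Q,t})$; inverting the same row-by-row identity shows that every row of $A$ is a row of $A_t$ plus one of $0$, $v^{S_t}$, $v^{T_t}$, whence $C(\Gamma_{\Q}) \subseteq C(\Gamma_{\Q,t})$ and therefore $C_t \subseteq C(\Gamma_{\Q,t})$.

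For the dimension, Section \ref{sc terminology} records $\dim C(\Gamma_{\Q}) = n+1$, so immediately $\dim C_t \leq n+3$. To get equality I would check that the images of $v^{S_t}$ and $v^{T_t}$ in $C_t/C(\Gamma_{\Q})$ are linearly independent, i.e.\ that none of $v^{S_t}$, $v^{T_t}$, $v^{S_t}+v^{T_t}$ lies in $C(\Gamma_{\Q})$. The middle case is exactly Lemma \ref{lemma TnotinC}. For $v^{S_t}$, a weight comparison suffices: $\wt(v^{S_t}) = 2^{t+1} \leq 2^{(n-1)/2}$ since $t \leq (n-3)/2$, whereas every nonzero codeword of $C(\Gamma_{\Q})$ has weight at least $2^{n-1}-2^{(n-1)/2}$ by Tables \ref{table weightE} and \ref{table weightH}, and for $n \geq 7$ this lower bound is strictly larger than $2^{(n-1)/2}$. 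For $v^{S_t} + v^{T_t}$, any representation $v^{S_t}+v^{T_t} = v^R$ with $R = (\PG(n,2)\setminus \Q)\setminus \Sigma$ for some $(n-1)$-space $\Sigma$ would force $R \triangle S_t \triangle T_t = \emptyset$, contradicting the bound $|R \triangle T_t \triangle S_t| > 2^{t+1}$ from Lemma \ref{lemma size RTS}.

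The only nontrivial step is this last independence check, which is precisely the situation Lemma \ref{lemma size RTS} was crafted to rule out; everything else reduces to bookkeeping on top of the decomposition $A_t = A + M_t$ and the minimum-weight data already tabulated.
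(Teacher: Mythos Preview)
Your proof is correct and follows essentially the same approach as the paper: both arguments read the double inclusion and the dimension bound off the decomposition $A_t = A + M_t$, invoke Lemmas~\ref{lemma char S} and~\ref{lemma char T} for the reverse inclusion, and use Lemma~\ref{lemma TnotinC} together with the weight/size estimates to rule out the three possible linear dependencies. The only cosmetic difference is that the paper packages the checks $v^{S_t}\notin C(\Gamma_{\Q})$ and $v^{S_t}+v^{T_t}\notin C(\Gamma_{\Q})$ into a single appeal to Proposition~\ref{prop char1}, whereas you cite the underlying ingredients (the weight tables and Lemma~\ref{lemma size RTS}) directly; the content is the same.
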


\begin{proof}
By Lemmas \ref{lemma char S} and \ref{lemma char T}, $v^{S_t}$ and $v^{T_{t}}$ are codewords of $C(\Gamma_{\Q,t})$. By \eqref{eqn AAK}, a row of the adjacency matrix of $\Gamma_{\Q,t}$ either is a row of the adjacency matrix of $\Gamma_{\Q}$ or differs from such a row by $v^{S_t}$ or $v^{T_t}$. Thus, any row of the adjacency matrix of $\Gamma_{\Q}$ is a codeword of $C(\Gamma_{\Q,t})$.

By Lemma \ref{lemma TnotinC}, $v^{T_t} \notin C(\Gamma_{\Q})$ and by Proposition \ref{prop char1}, for any $w\in C(\Gamma_{\Q})$, we have that none of $w$ and $w+v^{T_t}$ is the vector $v^{S_t}$. Thus, $v^{S_t}$, $v^{T_t}$ and a basis of $C(\Gamma_{\Q})$ form a linearly independent set of size $2+(n+1)=n+3$.

In \eqref{eqn AAK}, since the 2-rank of $M_t$ is $2$, the 2-rank of $C(\Gamma_{\Q})$ differs from that of $C(\Gamma_{\Q,t})$ by at most $2$. Since $v^{S_t}$ and $v^{T_t}$ and a basis of $C(\Gamma_{\Q})$ form a linearly independent set in $C(\Gamma_{\Q,t})$ with size two more than the 2-rank of $C(\Gamma_{\Q})$, they form a basis of $C(\Gamma_{\Q,t})$.
\end{proof}

\begin{lemma}\label{lemma basis2}
$C(\Gamma_{\Q,t,t})=\left<C(\Gamma_{\Q,t,t}),v^{S_{t,t}},v^{T_{t,t}}\right>$ and the 2-rank of $C(\Gamma_{\Q,t,t})$ is $n+3$.
\end{lemma}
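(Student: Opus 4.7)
The plan is to follow the proof of Lemma \ref{lemma basis1} \emph{mutatis mutandis}, substituting the ``doubled'' ingredients ($S_{t,t}$, $T_{t,t}$, $M_{t,t}$, Proposition \ref{prop char2}) for the corresponding single-quadric ones. I would proceed in three steps.

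First, I would establish the containment $\langle C(\Gamma_{\Q}), v^{S_{t,t}}, v^{T_{t,t}}\rangle \subseteq C(\Gamma_{\Q,t,t})$. Lemmas \ref{lemma char S} and \ref{lemma char T} already put $v^{S_{t,t}}$ and $v^{T_{t,t}}$ inside $C(\Gamma_{\Q,t,t})$. Equation \eqref{eqn AAK2} writes each row of $A$ as the corresponding row of $A_{t,t}$ plus an element of $\{0, v^{S_{t,t}}, v^{T_{t,t}}\}$, so every row of $A$ is itself a codeword of $C(\Gamma_{\Q,t,t})$, which gives $C(\Gamma_{\Q}) \subseteq C(\Gamma_{\Q,t,t})$ and hence the desired containment.

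Second, I would produce a linearly independent set of size $n+3$ inside $C(\Gamma_{\Q,t,t})$, namely a basis of $C(\Gamma_{\Q})$ (of size $n+1$) enlarged by $v^{S_{t,t}}$ and $v^{T_{t,t}}$. Independence reduces to two claims: (i) $v^{T_{t,t}} \notin C(\Gamma_{\Q})$, which is exactly Lemma \ref{lemma TnotinC}; and (ii) neither $v^{S_{t,t}}$ nor $v^{S_{t,t}} + v^{T_{t,t}}$ lies in $C(\Gamma_{\Q})$. For (ii), both candidates belong to $C_{t,t}$, so by Proposition \ref{prop char2} they have weight at least $2^{t+2}$ with equality only at $v^{S_{t,t}}$ itself; combining this with the weight distributions in Tables \ref{table weightE} and \ref{table weightH}, where every nonzero weight exceeds $2^{t+2}$ under the standing hypotheses $n\geq 7$ and $t$ in the range allowed by Theorem \ref{thm B}, together with $|S_{t,t} \triangle T_{t,t}| = |S_{t,t}| + |T_{t,t}|$ (using disjointness and Lemma \ref{lemma size T}) missing the weight spectrum of $C(\Gamma_{\Q})$, rules out both possibilities.

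Finally, I would invoke the rank-perturbation bound. Since $M_{t,t}$ has 2-rank exactly $2$ by \eqref{eqn AAK2}, $\dim C(\Gamma_{\Q,t,t})$ differs from $\dim C(\Gamma_{\Q}) = n+1$ by at most $2$, so $\dim C(\Gamma_{\Q,t,t}) \leq n+3$. The independent set of size $n+3$ then forces equality, simultaneously yielding $C(\Gamma_{\Q,t,t}) = \langle C(\Gamma_{\Q}), v^{S_{t,t}}, v^{T_{t,t}}\rangle$ and the claimed 2-rank. The only step that is not a literal copy of the single-quadric argument is the arithmetic check in (ii), since now one must also verify that the weight $|S_{t,t}| + |T_{t,t}|$ misses all three nonzero table weights; this is the expected but entirely routine obstacle, on exactly the same footing as the estimates already executed in Lemma \ref{lemma size ws} and Lemma \ref{lemma size T}.
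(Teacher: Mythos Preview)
Your proposal is correct and follows the same three-step scaffold (containment, independence, rank bound) that the paper uses for Lemma~\ref{lemma basis1}, which is all the paper invokes here. One minor simplification: the separate arithmetic check that $|S_{t,t}|+|T_{t,t}|$ misses the weight spectrum of $C(\Gamma_{\Q})$ is not actually needed, since Proposition~\ref{prop char2} (via the case $w+v^{T_{t,t}}$ in its proof, handled by Lemma~\ref{lemma size RTS}) already shows that $w+v^{T_{t,t}}\neq v^{S_{t,t}}$ for every $w\in C(\Gamma_{\Q})$, which is equivalent to $v^{S_{t,t}}+v^{T_{t,t}}\notin C(\Gamma_{\Q})$.
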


\begin{proof}
The proof is similar to that of Lemma \ref{lemma basis1}.
\end{proof}

We now give the parameters of $C(\Gamma_{\Q,t})$ and $C(\Gamma_{\Q,t,t})$. Recall the upper sign of $\mp$ is taken when if $\Q$ is elliptic, and otherwise if $\Q$ is hyperbolic.

\begin{theorem}\label{thm code}
$C(\Gamma_{\Q,t})$ is a $[2^n \mp 2^{\frac{n-1}{2}},n+3,2^{t+1}]_2$-code. $C(\Gamma_{\Q,t,t})$ is a $[2^n \mp 2^{\frac{n-1}{2}},n+3,2^{t+2}]_2$-code
\end{theorem}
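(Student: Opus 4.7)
The plan is to read off the three code parameters by assembling pieces that have already been put in place in Sections~3, 4, and~5.

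For the length, I would simply observe that by construction $\Gamma_{\Q,t}$ and $\Gamma_{\Q,t,t}$ have the same vertex set as $\Gamma_{\Q}$, namely $V_\Q = \PG(n,2)\setminus \Q$, so the length of either code equals $|V_\Q|$, which is $2^n \mp 2^{(n-1)/2}$ by the $v$-column of Table~\ref{table para} (with the sign convention stated just before the theorem). For the dimension, I would quote Lemmas~\ref{lemma basis1} and~\ref{lemma basis2}, which state that the $2$-rank of $C(\Gamma_{\Q,t})$ and of $C(\Gamma_{\Q,t,t})$ is $n+3$.

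The minimum weight is where the real content lies, but that content is already carried out in Propositions~\ref{prop char1} and~\ref{prop char2}. The key observation is that those propositions describe the minimum weight of $C_t$ and $C_{t,t}$, not of $C(\Gamma_{\Q,t})$ and $C(\Gamma_{\Q,t,t})$ directly. The bridge is provided by Lemmas~\ref{lemma basis1} and~\ref{lemma basis2}, which give $C(\Gamma_{\Q,t}) = C_t$ and $C(\Gamma_{\Q,t,t}) = C_{t,t}$. Combining these identifications with Propositions~\ref{prop char1} and~\ref{prop char2}, the minimum weight of $C(\Gamma_{\Q,t})$ is $2^{t+1}$, attained by $v^{S_t}$, and the minimum weight of $C(\Gamma_{\Q,t,t})$ is $2^{t+2}$, attained by $v^{S_{t,t}}$; that these vectors actually lie in the respective codes is guaranteed by Lemma~\ref{lemma char S}.

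Since all three ingredients are already in hand, there is no substantive obstacle: the proof is a one-paragraph assembly. The only point demanding any care is to make sure the identification $C(\Gamma_{\Q,t}) = C_t$ (resp.\ $C(\Gamma_{\Q,t,t}) = C_{t,t}$) is invoked explicitly, since Propositions~\ref{prop char1} and~\ref{prop char2} are stated for $C_t$ and $C_{t,t}$ rather than for the graph codes themselves. Once that identification is cited, the three parameters $[n, k, d]$ fall out immediately.
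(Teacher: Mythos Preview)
Your proposal is correct and follows essentially the same route as the paper: the length is the vertex count of $\Gamma_{\Q}$, the dimension comes from Lemmas~\ref{lemma basis1} and~\ref{lemma basis2}, and the minimum weight comes from Propositions~\ref{prop char1} and~\ref{prop char2} once the identifications $C(\Gamma_{\Q,t})=C_t$ and $C(\Gamma_{\Q,t,t})=C_{t,t}$ from those same lemmas are invoked. Your explicit emphasis on this last identification is a useful clarification, since the propositions are indeed stated for $C_t$ and $C_{t,t}$ rather than for the graph codes directly.
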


\begin{proof}
The length of $C(\Gamma_{\Q,t})$ and $C(\Gamma_{\Q,t,t})$ are the number of vertices of their respective graphs, which is $2^n \mp 2^{\frac{n-1}{2}}$. Other parameters of the codes follow from Lemmas \ref{lemma basis1}, \ref{lemma basis2}, and Proportions \ref{prop char1}, \ref{prop char2}.
\end{proof}

\begin{theorem}\label{thm count}
The graphs $\Gamma_\Q, \Gamma_{\Q,1}, \Gamma_{\Q,2}, \cdots, \Gamma_{\Q,\frac{n-3}{2}},\Gamma_{\Q,1,1}$, $\Gamma_{\Q,2,2}$, $\cdots$, $\Gamma_{\Q,m,m}$ are distinct up to isomorphism, where $m=\frac{n-3}{2}$ if $\Q$ is elliptic and $m=\frac{n-5}{2}$ if $\Q$ is hyperbolic.
\end{theorem}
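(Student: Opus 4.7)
The plan is to separate the listed graphs into isomorphism classes using invariants drawn from their binary codes, refined, where the codes share too many parameters, by the structure of the subgraph on the support of the unique minimum weight codeword. A graph isomorphism induces a coordinate permutation of codes (since it conjugates adjacency matrices by a permutation matrix), so any such code-plus-support-structure datum is a genuine graph-isomorphism invariant.

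First I would peel off $\Gamma_\Q$: by Section \ref{sc terminology} its code has $\mathbb F_2$-dimension $n+1$, whereas by Lemmas \ref{lemma basis1} and \ref{lemma basis2} every switched graph has code dimension $n+3$. Next, Theorem \ref{thm code} gives $C(\Gamma_{\Q,t})$ minimum weight $2^{t+1}$ and $C(\Gamma_{\Q,t,t})$ minimum weight $2^{t+2}$, which are strictly increasing in $t$. Hence $\Gamma_{\Q,1},\ldots,\Gamma_{\Q,\frac{n-3}{2}}$ are pairwise non-isomorphic, and similarly $\Gamma_{\Q,1,1},\ldots,\Gamma_{\Q,m,m}$ are pairwise non-isomorphic.

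The main obstacle is the cross-family comparison, since $\Gamma_{\Q,t+1}$ and $\Gamma_{\Q,t,t}$ share the code minimum weight $2^{t+2}$ whenever both graphs occur in the list. Here I would invoke Propositions \ref{prop char1} and \ref{prop char2}, which state that the minimum weight codeword of each of $C(\Gamma_{\Q,t+1})$ and $C(\Gamma_{\Q,t,t})$ is unique, equal to $v^{S_{t+1}}$ and $v^{S_{t,t}}$ respectively. A hypothetical isomorphism $\Gamma_{\Q,t+1}\to\Gamma_{\Q,t,t}$ would then carry $S_{t+1}$ bijectively onto $S_{t,t}$ and induce an isomorphism of the two induced subgraphs. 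Since Godsil--McKay switching with a set $S$ modifies only edges between $S$ and its complement, the subgraph of $\Gamma_{\Q,t+1}$ on $S_{t+1}$ coincides with the subgraph of $\Gamma_\Q$ on $S_{t+1}$, which is null by Lemma \ref{lemma subgraph1}; whereas the subgraph of $\Gamma_{\Q,t,t}$ on $S_{t,t}$ coincides with the subgraph of $\Gamma_\Q$ on $S_{t,t}$, which by Lemma \ref{lemma subgraph2} is regular of positive degree $2^{t+1}\geq 4$. A null graph is not isomorphic to one with an edge, a contradiction. The hardest step is precisely this one; the crucial ingredient making it work is the uniqueness clause in Propositions \ref{prop char1} and \ref{prop char2}, which upgrades ``subgraph on the support of a minimum weight codeword'' from an invariant defined only up to a choice of codeword to a well-defined graph-isomorphism invariant, and then the null-versus-regular dichotomy inherited from $\Gamma_\Q$ completes the separation.
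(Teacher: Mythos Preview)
Your proposal is correct and follows essentially the same approach as the paper: separate $\Gamma_\Q$ by code dimension, then use the uniqueness of the minimum-weight codeword (Propositions \ref{prop char1} and \ref{prop char2}) to force any isomorphism to carry the switching set to the switching set, and finally distinguish $\Gamma_{\Q,t+1}$ from $\Gamma_{\Q,t,t}$ via the null-versus-regular induced subgraph dichotomy of Lemmas \ref{lemma subgraph1} and \ref{lemma subgraph2}. The only cosmetic difference is that you handle the within-family separation explicitly via the minimum-weight values from Theorem \ref{thm code} before treating the cross-family case, whereas the paper folds this into a single size comparison after establishing $\Phi(S)=S'$; the substance is the same.
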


\begin{proof}
$\Gamma_\Q$ is distinct from other graphs in the list because it has a 2-rank $n+1$ \cite[Theorem 5.3]{P} but others do not by Lemmas \ref{lemma basis1} and \ref{lemma basis2}. Let $\Gamma,\Gamma'$ be two graphs listed above other than $\Gamma_\Q$. Let $S$ and $S'$ be switching sets of $\Gamma_{\Q}$ such that $\Gamma,\Gamma'$ are obtained from $\Gamma_{\Q}$ with switching sets respectively $S$ and $S'$.

Suppose there is an isomorphism $\phi$ between $\Gamma$ and $\Gamma'$. Then $\phi$ induces a code isomorphism $\Phi$ between $C(\Gamma)$ and $C(\Gamma')$. Since $\Phi$ maps minimum word(s) of $C(\Gamma)$ to those of $C(\Gamma')$, we have $\Phi(S)=S'$ by Propositions \ref{prop char1} and \ref{prop char2}. Considering the size of the switching sets given in Lemma \ref{lemma size}, we may assume without loss of generality that $\Gamma=\Gamma_{\Q,t+1}$ and $\Gamma'=\Gamma_{\Q,t,t}$ for some $t$. By Lemma \ref{lemma subgraph1}, the subgraph of $\Gamma_{\Q}$, and hence of $\Gamma$, with vertex set $S=S_{t+1}$ is null. But by Lemma \ref{lemma subgraph2}, the subgraph of $\Gamma_{\Q}$, and hence of $\Gamma'$, with vertex set $S'=S_{t,t}$ is not null. This contradicts $\Phi(S)=S'$, and so $\Gamma$ and $\Gamma'$ are non-isomorphic.
\end{proof}

Since we work under the assumption that $n\geq 7$ in Sections \ref{sc rank} and \ref{sc number}, Theorem \ref{thm count} is valid under the same assumption. However, it can be checked directly that in case $\Q$ is elliptic and $n=5$, $\Gamma_{\Q}$, $\Gamma_{\Q,1}$ and $\Gamma_{\Q,1,1}$ are non-isomorphic; in case $\Q$ is hyperbolic and $n=5$, $\Gamma_{\Q}$ and $\Gamma_{\Q,1}$ are non-isomorphic. In conclusion, for $n\geq 5$, if $\Q$ is an elliptic quadric in $\PG(n,2)$, then Theorems \ref{thm A} and \ref{thm B} give $n-3$ non-isomorphic graphs, other than $\Gamma_{\Q}$, with the same parameters as $\Gamma_{\Q}$, where the parameters are shown in Table \ref{table para}.
For $n\geq 5$, if $\Q$ is a hyperbolic quadric in $\PG(n,2)$, then Theorems \ref{thm A} and \ref{thm B} give $n-2$ non-isomorphic graphs, other than $\Gamma_{\Q}$, with the same parameters as $\Gamma_{\Q}$.

\end{document}